\documentclass[12pt]{amsart}

\usepackage{aliascnt,amsmath,amssymb,amsthm,amsfonts,enumerate,mathrsfs,latexsym,mathtools,bm,xcolor,shuffle}
\usepackage{stmaryrd}	
\usepackage[abbrev]{amsrefs}
\usepackage{appendix}

\usepackage[OT2,T1]{fontenc}
\usepackage{scalerel}

\usepackage[marginparwidth=0pt,margin=24truemm]{geometry}

\definecolor{mylinkcolor}{RGB}{20, 160, 80}
\definecolor{mycitecolor}{RGB}{20, 80, 140}

\usepackage{hyperref}
\usepackage[nameinlink]{cleveref}
\hypersetup{setpagesize=false, bookmarksnumbered=true, bookmarksopen=true, colorlinks=true, linkcolor=mylinkcolor, citecolor=mycitecolor,}
\usepackage{autonum}

\usepackage{tikz}
\usetikzlibrary{intersections,calc,arrows.meta,decorations,decorations.pathreplacing}

\theoremstyle{plain}

\newtheorem{thm}{Theorem}[section]
\crefname{thm}{Theorem}{Theorems}

\newaliascnt{lem}{thm} 
\newtheorem{lem}[lem]{Lemma}
\aliascntresetthe{lem}
\crefname{lem}{Lemma}{Lemmas}

\newaliascnt{prop}{thm}
\newtheorem{prop}[prop]{Proposition}
\aliascntresetthe{prop}
\crefname{prop}{Proposition}{Propositions}

\newaliascnt{cor}{thm}
\newtheorem{cor}[cor]{Corollary}
\aliascntresetthe{cor}
\crefname{cor}{Corollary}{Corollaries}

\newaliascnt{fact}{thm}

\aliascntresetthe{fact}
\crefname{fact}{Fact}{Facts}

\newaliascnt{conj}{thm}
\newtheorem{conj}[conj]{Conjecture}
\aliascntresetthe{conj}
\crefname{conj}{Conjecture}{Conjectures}

\newaliascnt{dfn}{thm}
\newtheorem{dfn}[dfn]{Definition}
\aliascntresetthe{dfn}
\crefname{dfn}{Definition}{Definitions}

\newaliascnt{rem}{thm}
\newtheorem{rem}[rem]{Remark}
\aliascntresetthe{rem}
\crefname{rem}{Remark}{Remarks}

\newaliascnt{ex}{thm}

\aliascntresetthe{ex}
\crefname{ex}{Example}{Examples}

\newtheorem{mainthm}{Main Theorem}
 
\newaliascnt{mainconj}{mainthm}   
 
\aliascntresetthe{mainconj}        
\crefname{mainconj}{Main Conjecture}{Main Conjectures}

\allowdisplaybreaks[2]

\everymath{\displaystyle}

\newcommand{\QQ}{\mathbb{Q}}
\newcommand{\ZZ}{\mathbb{Z}}

\newcommand{\CC}{\mathbb{C}}

\DeclareMathOperator{\Qhat}{\widehat{\mathcal{Q}}}
\DeclareMathOperator{\Ahat}{\widehat{\mathcal{A}}}
\DeclareMathOperator{\Shat}{\widehat{\mathcal{S}}}

\DeclareMathOperator{\Span}{Span}

\newcommand\quotient[2]{
	\mathchoice
	{
		\text{\raise1ex\hbox{$#1$}\Big/\lower1ex\hbox{$#2$}}%
	}
	{
		#1\,/\,#2
	}
	{
		#1\,/\,#2
	}
	{
		#1\,/\,#2
	}
}

\title{Duality formulas for $\Qhat$-multiple zeta values}

\author{Koki Ishida}
\address{Mathematical Institute, Tohoku University, Sendai, Japan.}
\email{ishida.koki.p1@dc.tohoku.ac.jp}

\date{}

\subjclass[2020]{11M32, 11R18, 05A30}

\keywords{multiple zeta value, $\bm{p}$-adic multiple zeta value, $t$-adic multiple zeta value, $q$-analogue of multiple zeta value, multiple harmonic $q$-sum}

\begin{document}

\begin{abstract}
In this paper, we introduce two duality formulas for $\Qhat$-multiple zeta values ($\Qhat$-MZVs for short) which are defined by Takeyama and Tasaka.
Since taking two different limits of $\Qhat$-MZV recovers the $\bm{p}$-adic multiple zeta value and the $t$-adic multiple zeta value respectively, finding the relation of $\Qhat$-MZVs partially supports the Kaneko--Zagier conjecture and its refined version.
Currently, three types of duality formulas for the $\bm{p}$-adic multiple zeta values are known, and for one of them, the $q$-analogue was studied by Takeyama and Tasaka.
We present $q$-analogues of the remaining two duality formulas for the $\bm{p}$-adic multiple zeta values.
\end{abstract}

\maketitle

\section{Introduction}
The Kaneko--Zagier conjecture, in \cite{KZ2}, states that there is a one-to-one correspondence between the finite multiple zeta value $\zeta_{\mathcal{A}}(\bm{k}) \in \mathcal{A} = (\textstyle{\prod}_{p}\mathbb{F}_{p}) / (\textstyle{\bigoplus}_{p}\mathbb{F}_p)$ and the symmetric multiple zeta value $\zeta_{S}(\bm{k}) \in \mathcal{Z}/\pi^2\mathcal{Z}$ defined for an index $\bm{k}$.
Here, for a positive integer $r$ and a tuple of non-negative integers $\bm{k} = (k_1,\dots, k_r)$, we set $\mathrm{wt}(k) = k_1 +\cdots+ k_r$ and $\mathrm{dep}(k) = r$, and call them the weight and the depth of $\bm{k}$, respectively.
We call such $\bm{k}$ an $\emph{index}$ if none of its entries is zero.
The \emph{empty index} $\varnothing$ is allowed with $\mathrm{wt}(\varnothing) = \mathrm{dep}(\varnothing) = 0$.
For any function $F$ on the indices, we define $F(\varnothing) = 1$.
The $\QQ$-vector space $\mathcal{Z}$ is spanned by all multiple zeta values 
\[
    \zeta(\bm{k}) = \sum_{0 < m_1 < \cdots < m_r}\prod_{j = 1}^{r}\frac{1}{{m_j}^{k_j}}
\]
for indices with $k_r \geq 2$.

Furthermore, its refined version, which describes the correspondence of the $\bm{p}$-adic finite multiple zeta value $\zeta_{\Ahat}(\bm{k})$ (or $\Ahat$-MZV) and the $t$-adic symmetric multiple zeta value $\zeta_{\Shat}(\bm{k})$ (or $\Shat$-MZV), which are introduced by Rosen \cite{Ros2}, Jarrosay \cite{Jar}, respectively.
According to the refined version of Kaneko--Zagier conjecture \cref{conj:KZ}, $\Ahat$-MZVs and $\Shat$-MZVs are expected to satisfy exactly the same relation.

As examples of the $\QQ$-linear relations for $\Ahat$-MZVs, the following three duality formulas are known.
\begin{thm}[\cite{Seki}*{Theorem 1.3}]\label{prop:seki}
    For an index $\bm{k}$, it holds that
    \[
        \sum_{l \geq 0} \zeta_{\Ahat}^{\star}(\bm{k}, \{1\}^{l}) \bm{p}^l =  -\sum_{l \geq 0} \zeta_{\Ahat}^{\star}(\bm{k}^{\lor}, \{1\}^{l}) \bm{p}^l.
    \]
\end{thm}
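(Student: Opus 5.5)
Recall the setting. $\zeta^{\star}_{\Ahat}(\bm{k})$ is the element
\[
\Bigl(\sum_{l\ge0}\zeta^{\star}_{\Ahat}(\bm k)_l\,\bm p^l\Bigr)
\]
of $\Ahat=\varprojlim_n \bigl(\prod_p \ZZ/p^n\bigr)\big/\bigl(\bigoplus_p \ZZ/p^n\bigr)$. Its $p$-component is the truncated star sum
\[
H^{\star}_{p-1}(\bm k)=\sum_{0<m_1\le\cdots\le m_r\le p-1}\prod_j m_j^{-k_j},
\]
and $\bm p=(p)_p$. The plan is to prove, for each prime $p$, an identity of $p$-adic numbers and then read it modulo $p^n$ for all $n$.

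The key tool is Hoffman's duality for star harmonic sums in its binomial form: for every $N\ge1$,
\[
S_N(\bm k):=\sum_{0<m_1\le\cdots\le m_r\le N}\prod_j m_j^{-k_j}
=\sum_{0<m_1\le\cdots\le m_s\le N}(-1)^{m_s-1}\binom{N}{m_s}\prod_j m_j^{-k^{\lor}_j}.
\]
Here $\bm k^{\lor}=(k^{\lor}_1,\dots,k^{\lor}_s)$ is the Hoffman dual. I would prove this by induction on the weight, using the recursion obtained from attaching a $1$ at the end versus increasing the last entry, together with $\sum_{m\le n\le N}\frac{(-1)^{n-1}}{n}\binom{N}{n}$-type identities. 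Alternatively, it can be quoted as a known identity.

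The generating series $\sum_l \zeta^{\star}(\bm k,\{1\}^l)\bm p^l$ has $p$-component
\[
\sum_{0<m_1\le\cdots\le m_r\le p-1}\prod_j m_j^{-k_j}\prod_{m_r\le n\le p-1}\Bigl(1-\frac{p}{n}\Bigr)^{-1}.
\]
The tail product equals $\prod_{n=m_r}^{p-1}\frac{n}{n-p}$, which simplifies to $(-1)^{p-m_r}\binom{p-1}{m_r-1}^{-1}$ up to a clean binomial factor. This converts the left side into a sum weighted by an inverse binomial coefficient. The inverse binomial $\binom{p-1}{m-1}^{-1}$ is a $p$-adic unit, and $\binom{p-1}{m-1}=(-1)^{m-1}\prod_{n<m}(1-p/n)$. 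So both sides of the statement become explicit binomially weighted sums. The main step is then to show that the left side, with weights $\binom{p-1}{m_r-1}^{-1}(-1)^{\dots}$, equals minus the right side with the dual index. I would apply the binomial-form duality above (with $N=p-1$ or $N=p$) and use $\binom{p}{m}=\frac{p}{m}\binom{p-1}{m-1}$ to match the weights. A cleaner route is to compute $S_p(\bm k)$ in two ways. Directly, $S_p(\bm k)=S_{p-1}(\bm k)+{}$terms with $m_r=p$; these extra terms carry a factor $p^{-k_r}$, so this route is invalid $p$-adically and I would avoid it.

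Instead I would work with the generating function identity
\[
\sum_{l}\,S_N(\bm k,\{1\}^l)x^l=\sum_{m_\bullet}\prod m_j^{-k_j}\prod_{m_r\le n\le N}(1-x/n)^{-1},
\]
which is a rational function of $x$. I would prove a polynomial duality for it at general $N$ and $x$, namely that it equals $-$ the same expression for $\bm k^{\lor}$ when $x=N+1$. This could be done by partial fractions in $x$, or by a connected-sum argument à la Seki--Yamamoto: one defines a connector $C(n,m)$ involving $\binom{n+m}{n}$ and $x$ and shows that transport relations move one entry at a time from $\bm k$ to $\bm k^{\lor}$. Specializing $x=\bm p$, $N=p-1$ then gives the theorem, since the series converge $p$-adically.

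The main obstacle is designing the connector so that both the $x$-deformation and the sign come out right. I would try
\[
C(n,m)=\frac{n!\,m!}{\prod_{i\le n}(i-x)\cdots}
\]
first, modeled on Seki's connected sums for $\Ahat$.
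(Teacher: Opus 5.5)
The paper gives no proof of this statement. It is quoted from Seki, and only the Takeyama--Tasaka $q$-analogue is mentioned, so your proposal has to stand on its own. As written it has a genuine gap: the step that carries the content of the theorem is never carried out.

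Your reduction is fine, except that the tail product is a binomial coefficient, not its inverse:
\[
\prod_{n=m}^{p-1}\Bigl(1-\frac{p}{n}\Bigr)^{-1}=\prod_{n=m}^{p-1}\frac{n}{n-p}=(-1)^{p-m}\binom{p-1}{m-1}.
\]
So for odd $p$ the $p$-adic sum of the $p$-component series on the left is exactly the $p$-integral rational number $A_{p-1}(\bm{k})$, where
\[
A_N(\bm{k})=\sum_{0<m_1\le\cdots\le m_r\le N}(-1)^{m_r-1}\binom{N}{m_r-1}\prod_{j=1}^{r}m_j^{-k_j}.
\]
The theorem is therefore equivalent to $A_{p-1}(\bm{k})=-A_{p-1}(\bm{k}^{\lor})$. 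You then list three strategies for this identity and execute none of them; the connector is not even written down.

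Your fallback claim is also false as stated. You assert that the $x=N+1$ specialization is antisymmetric under $\bm{k}\leftrightarrow\bm{k}^{\lor}$ for general $N$. For the self-dual index $\bm{k}=(1)$ with $N=1$, the generating function is $1/(1-x)$. At $x=2$ this equals $-1$, whereas antisymmetry would force $0$. The parity of $N$ is essential, and any proof must use it.

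The route you set aside is the one that works. Your objection that level-$p$ sums contain $p^{-k_r}$ is misplaced: for each $p$ you are proving an exact identity in $\QQ$, and only $A_{p-1}(\bm{k})$ itself has to be reduced modulo $p^n$. The argument goes as follows.

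\begin{enumerate}
\item Use Pascal's rule $\binom{N}{m-1}=\binom{N+1}{m}-\binom{N}{m}$, rather than $\binom{p}{m}=\frac{p}{m}\binom{p-1}{m-1}$, which lowers the last exponent.
\item Apply your binomial form of Hoffman's duality at levels $N+1$ and $N$, to $\bm{k}^{\lor}$, using that $\lor$ is an involution.
\item Split off the $m_r=N+1$ term of the level-$(N+1)$ sum.
\end{enumerate}
With $\bm{k}^{\lor}=(k^{\lor}_1,\dots,k^{\lor}_s)$, your truncated star sums $S_M$, and $S_M(\varnothing)=1$, this gives
\[
A_N(\bm{k})=(N+1)^{-k^{\lor}_s}S_{N+1}(k^{\lor}_1,\dots,k^{\lor}_{s-1})-(-1)^{N}(N+1)^{-k_r}S_{N+1}(k_1,\dots,k_{r-1}).
\]
For $N=p-1$ even, the right side changes sign under $\bm{k}\leftrightarrow\bm{k}^{\lor}$. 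Hence $A_{p-1}(\bm{k})=-A_{p-1}(\bm{k}^{\lor})$ exactly in $\QQ$, and the non-$p$-integral pieces cancel. The same formula shows why odd $N$ fails.

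Finally, every term $S_{p-1}(\bm{k},\{1\}^l)p^l$ is $p$-integral. So the series truncated at $l<n$ agrees with $A_{p-1}(\bm{k})$ modulo $p^n$. This gives the identity in every $\mathcal{A}_n$, and hence in $\Ahat$.
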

\begin{thm}[\cite{Ros1}*{Theorem 4.5}]\label{prop:rosen}
    For an index $\bm{k}$, it holds that
        \[
        \zeta_{\Ahat}(\bm{k}) + \sum_{l \geq 0} \zeta_{\Ahat}(\bm{k}\ast \{1\}^l, 1)\bm{p}^{l + 1} = (-1)^{\mathrm{dep}\,\bm{k}} \sum_{\bm{k} \preceq \bm{l}} \zeta_{\Ahat}(\bm{l}).
        \]
\end{thm}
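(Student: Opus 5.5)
This is Rosen's duality (\cref{prop:rosen}). I would work componentwise: fix a prime $p$ and prove the identity for the $p$-component in $\ZZ_p$ modulo $p^n$ for every $n$. In that component, $\zeta_{\Ahat}(\bm{k})$ is the truncated sum $H_{p-1}(\bm{k}) = \sum_{0<m_1<\dots<m_r<p}\prod_j m_j^{-k_j}$, and $\bm{p}$ is $p$.

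The engine is the classical Hoffman-type duality for finite sums obtained from the binomial identity
\[
\sum_{m=1}^{n}\binom{n}{m}(-1)^{m-1}\frac{1}{m^{k}}\cdots,
\]
in its iterated form: for $n\ge1$,
\[
\sum_{0<m_1<\cdots<m_r\le n}(-1)^{m_r-1}\binom{n}{m_r}\prod_j\frac{1}{m_j^{k_j}} = \sum_{0<n_1\le\cdots\le n_s\le n}\prod_j\frac{1}{n_j^{k'_j}},
\]
with $\bm{k}'$ the Hoffman dual of $\bm{k}$. The sum over $\bm{k}\preceq\bm{l}$ on the right of \cref{prop:rosen} is exactly the expansion of a star-type sum into non-star sums. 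So my target is the identity in the form (star sum over the dual) $=$ (binomially weighted sum).

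\textbf{Step 1.} Apply this with $n=p-1$. Then $(-1)^{m}\binom{p-1}{m} = \prod_{i=1}^{m}(1-p/i)$, which is the exact expansion
\[
\sum_{l\ge0}(-p)^l\sum_{0<i_1<\cdots<i_l\le m}\frac{1}{i_1\cdots i_l}.
\]
Substituting this into the left-hand side turns it into $\sum_{l}(-p)^l$ times harmonic sums in which $l$ extra letters $1$ are interleaved with $m_1<\dots<m_r$, all lying below $m_r$ (with ties allowed). Ties produce stuffle products, so these sums are precisely harmonic sums of $\bm{k}\ast\{1\}^l$ truncated so that the last entry is $k_r$. That accounts for the shape of the correction terms.

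\textbf{Step 2.} Convert to the exact form in the statement. The terms $\zeta(\bm{k}\ast\{1\}^l,1)\,p^{l+1}$ suggest using the symmetry $m\mapsto p-m$, or comparing the "$\le m_r$" constraint with the full sum over $p$, giving a difference involving an appended $1$ and one extra factor of $p$. Signs $(-1)^{\mathrm{dep}}$ come from reversal $H_{p-1}(\bm{k}) \equiv (-1)^{\mathrm{wt}}H_{p-1}(\overleftarrow{\bm{k}})$ corrected to all orders by expanding $(p-m)^{-k}$ $p$-adically. Alternatively, I would derive everything at the level of generating series in the harmonic algebra, which avoids index bookkeeping.

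\textbf{Main obstacle.} The hard part is the exact, all-orders bookkeeping in Step 2. Matching the precise stuffle terms $\bm{k}\ast\{1\}^l$ with the appended $1$, rather than just checking the identity modulo $p$, requires care. I would first verify small depths (for example $\bm{k}=(k)$) to pin down the combinatorics, then pass to a formal generating-function identity in the quasi-shuffle algebra and check that it specializes correctly.
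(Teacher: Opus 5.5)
There are two genuine gaps.

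First, your ``engine'' is misstated, and so is its link to the right-hand side. With strict inequalities $0<m_1<\cdots<m_r\le n$, the binomially weighted sum is not $\zeta^{\star}_n(\bm{k}^{\vee})$. For $\bm{k}=(1,1)$ and $n=1$ the left side is an empty sum, while $\zeta^{\star}_1(2)=1$; Hoffman's identity with the dual index needs $0<m_1\le\cdots\le m_r\le n$. Nor is $\sum_{\bm{k}\preceq\bm{l}}\zeta(\bm{l})$ the expansion of a star sum. It runs over \emph{refinements} of $\bm{k}$, whereas $\zeta^{\star}(\bm{a})=\sum_{\bm{l}\preceq\bm{a}}\zeta(\bm{l})$ runs over coarsenings, and the set of refinements of $\bm{k}$ is the set of coarsenings of some index only when $\mathrm{dep}(\bm{k})=1$. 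What you need, with no dual index at all, is the $q=1$ case of \cref{prop:HHT}. Writing $\zeta_N(\bm{l})$ for the truncated non-star sum over $0<m_1<\cdots<m_s\le N$, it reads
\[
\sum_{0<m_1<\cdots<m_r\le N}\ \prod_{j=1}^{r}\frac{1}{m_j^{k_j}}\,(-1)^{m_r}\binom{N}{m_r}=(-1)^{r}\sum_{\bm{k}\preceq\bm{l}}\zeta_N(\bm{l}).
\]
This is where $(-1)^{\mathrm{dep}(\bm{k})}$ comes from; reversal and the symmetry $m\mapsto p-m$ play no role.

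Second, Step 2 is the actual proof, and you have not carried it out. Expanding $(-1)^{m_r}\binom{p-1}{m_r}=\prod_{j\le m_r}(1-p/j)$ directly, as in Step 1, places the extra letters $1$ at indices $\le m_r$. You then get sums whose last letter is $k_r$ or $k_r+1$, not $\zeta(\bm{k}\ast\{1\}^l,1)$. The missing idea is to telescope first. Since $\prod_{j=1}^{p-1}(1-p/j)=1$ for odd $p$, and consecutive partial products differ by $\frac{p}{i}\prod_{j<i}(1-p/j)$, we have
\[
(-1)^{m}\binom{p-1}{m}=1+\sum_{i=m+1}^{p-1}\frac{p}{i}\prod_{j=1}^{i-1}\Bigl(1-\frac{p}{j}\Bigr)\qquad(0\le m\le p-1).
\]
Substitute this into the display above with $N=p-1$ and sum first over $m_1<\cdots<m_r<i$. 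The left-hand side becomes $\zeta_{p-1}(\bm{k})+\sum_{0<i<p}\frac{p}{i}\,\zeta_{i-1}(\bm{k})\sum_{l\ge0}(-p)^{l}\zeta_{i-1}(\{1\}^{l})$. The new top variable $i$ supplies the appended $1$ and the extra factor $p$. Because all indices now lie below $i$, the product is an honest stuffle $\zeta_{i-1}(\bm{k}\ast\{1\}^{l})$. This is exactly the $q=1$ shadow of \cref{prop:sumqbinom}, \cref{prop:qhoffman} and \cref{prop:qrosen}, which is how the paper argues; it then applies $\phi_{\Ahat}$ to \cref{thm:qrosen}.

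Note what comes out: $\zeta_{p-1}(\bm{k})+\sum_{l\ge0}(-1)^{l}\zeta_{p-1}(\bm{k}\ast\{1\}^l,1)\,p^{l+1}$. The sign $(-1)^l$ from your $(-p)^l$ survives. It is also present in the paper's proof of \cref{prop:qrosen} until the last line, where it is dropped. Your planned small-case check would catch this. For $\bm{k}=(1)$ and $p=3$ the exact identity is $\tfrac32+\tfrac32-\tfrac92=-\tfrac32$, and without the sign it fails. Modulo $\bm{p}^3$, the unsigned version would force $\zeta_{\mathcal{A}}(2,1)=0$, which is not expected.
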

\begin{thm}[\cite{MSW}*{Theorem 5.3}]\label{prop:msw}
    For an index $\bm{k}$, it holds that
        \[
        \zeta_{\Ahat}(\bm{k}) = (-1)^{\mathrm{dep}(\bm{k})} \sum_{l \geq 0} \left(\sum_{\begin{subarray}{c} \bm{l} \in (\mathbb{Z}_{\geq 0})^{\mathrm{dep}(\bm{k})} \\ \mathrm{wt}\, \bm{l} = l \end{subarray}}\sum_{\bm{l} \oplus \bm{k} \preceq \bm{m} \preceq \bm{l} \oslash \bm{k}} \zeta_{\Ahat}(\bm{m}) \right)\bm{p}^{l}.
        \]
\end{thm}
Since these formulas can be regarded as refinements of Hoffman's duality formula for finite multiple zeta values, we call them duality formulas as well.
\begin{thm}[\cite{Hof}*{Theorem 4.6, Theorem 4.7}]
    For an index $\bm{k}$, we have
    \begin{enumerate}
        \item[(1)] $\zeta_{\mathcal{A}}^{\star}(\bm{k}) = -\zeta_{\mathcal{A}}^{\star}(\bm{k}^{\lor})$,\vspace{0.2cm}
        \item[(2)] $\zeta_{\mathcal{A}}(\bm{k}) = \sum_{\bm{k} \preceq \bm{l}} \zeta_{\mathcal{A}}(\bm{l})$.
    \end{enumerate}
\end{thm}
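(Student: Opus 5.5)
One route is to prove both parts from the generating-function description of finite multiple zeta values, using elementary identities for truncated harmonic sums modulo $p$. Throughout, $\zeta_{\mathcal{A}}(\bm{k}) = (H_{p-1}(\bm{k}) \bmod p)_p$, where
\[
H_{N}(\bm{k})=\sum_{0<m_1<\cdots<m_r\le N}\prod_j m_j^{-k_j},
\]
and $H^\star$ is defined the same way with non-strict inequalities.

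For part (1), I would begin with Hoffman's partial-fraction identity for star sums, for every $N\ge 1$:
\[
\sum_{0<m_1\le\cdots\le m_r\le N}\prod_j\frac{1}{m_j^{k_j}}\cdot(\text{binomial weight})
=\sum(\text{dual index}).
\]
In its cleanest form it reads
\[
\sum_{N\ge m_r\ge\cdots\ge m_1\ge1}(-1)^{m_1-1}\binom{N}{m_1}\prod_j m_j^{-k_j}=H^\star_N(\bm{k}^\lor).
\]
I would prove this by induction on the weight. The base case $k=(1)$ is the classical identity $\sum_{m=1}^N(-1)^{m-1}\binom{N}{m}/m=H_N(1)$. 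The inductive step is the operation "append $1$ / raise the last entry", which corresponds to the transformation $f(N)\mapsto\sum_{n\le N}f(n)/n$ on one side and to Abel summation on the other. Next I would specialize to $N=p-1$. Since $\binom{p-1}{m}\equiv(-1)^m \pmod p$, the weight $(-1)^{m_1-1}\binom{p-1}{m_1}$ reduces to $-1$. Hence $H^\star_{p-1}(\bm{k})\equiv -H^\star_{p-1}(\bm{k}^\lor)\pmod p$, which is (1).

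For part (2), I would reduce to (1) rather than prove it directly. There are two standard conversions between the two kinds of sums. The first expresses a star sum through strict sums,
\[
\zeta^\star_{\mathcal A}(\bm{k})=\sum_{\bm{l}\preceq\bm{k}}\zeta_{\mathcal A}(\bm{l}),
\]
where the sum is over all indices obtained from $\bm{k}$ by replacing commas with plus signs. The second is its Möbius inverse, which carries a sign $(-1)^{\mathrm{dep}\bm{k}-\mathrm{dep}\bm{l}}$. I would also use the reversal relation $\zeta_{\mathcal A}(\overline{\bm{k}})=(-1)^{\mathrm{wt}}\zeta_{\mathcal A}(\bm{k})$, which comes from the substitution $m\mapsto p-m$.

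With these in hand, I would apply (1) to every coarsening of $\bm{k}$ and then expand back into strict sums. The combinatorial fact to use is how Hoffman duality interacts with the refinement order: duality exchanges "merging two adjacent parts" with "splitting a part". In the $\{x,y\}$-word picture, $\vee$ swaps $x$ and $y$ (up to reversal), while refinement corresponds to choosing subsets of the letters. After the swap, the alternating sum over coarsenings becomes the sum over all $\bm{l}$ with $\bm{k}\preceq\bm{l}$. The signs combine through the weight-parity factor coming from reversal.

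I expect this last step to be the main obstacle. It requires checking that the inclusion–exclusion over coarsenings, combined with duality, yields exactly the unsigned sum over refinements with the overall sign $+1$. If that bookkeeping becomes too messy, the alternative is to prove (2) directly. In that approach one proves the analogous identity for $H_{p-1}$ using the expansion $\prod_j(1-x/m_j)^{-1}$ and the polynomial identity $\prod_{m=1}^{p-1}(1-x/m)\equiv1-x^{p-1} \pmod p$, and reads off coefficients.
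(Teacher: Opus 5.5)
Your overall architecture can be made to work, but there are two concrete problems. The second one, in part (2), blocks the proof as you have set it up.

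\textbf{Part (1).} The identity you display is false as written. With your convention $m_1\le\cdots\le m_r$, the binomial factor sits on the \emph{smallest} variable. For $\bm{k}=(1,1)$ and $N=2$ your left side is $2+1-\tfrac14=\tfrac{11}{4}$, while $H^{\star}_2((1,1)^{\lor})=H^{\star}_2(2)=\tfrac54$. The correct identity puts the weight on the largest variable:
\[
\sum_{1\le m_1\le\cdots\le m_r\le N}(-1)^{m_r-1}\binom{N}{m_r}\prod_{j=1}^{r}\frac{1}{m_j^{k_j}}=H^{\star}_N(\bm{k}^{\lor}).
\]
In the same example this gives $2-\tfrac12-\tfrac14=\tfrac54$, as it should. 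This is also the form for which your induction actually runs. The transform $Tf(N)=\sum_{i=1}^{N}(-1)^{i-1}\binom{N}{i}f(i)$ is an involution and satisfies $T(f(i)/i)(N)=\sum_{n\le N}Tf(n)/n$. That identity handles ``raise the last entry of $\bm{k}$ $\leftrightarrow$ append $1$ to $\bm{k}^{\lor}$''. The ``append $1$ to $\bm{k}$'' step then follows by using the hypothesis for $\bm{k}^{\lor}$ (same weight) and inverting $T$. Since $(-1)^{m-1}\binom{p-1}{m}\equiv-1$ whichever variable carries it, your conclusion for (1) survives the correction.

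\textbf{Part (2).} The sign you are aiming for is wrong, so the step you flagged as the main obstacle cannot be completed. The statement as printed drops a factor. The correct relation is
\[
\zeta_{\mathcal{A}}(\bm{k})=(-1)^{\mathrm{dep}(\bm{k})}\sum_{\bm{k}\preceq\bm{l}}\zeta_{\mathcal{A}}(\bm{l}).
\]
This is the constant term of \cref{prop:rosen}, and also the $q=1$, $N=p-1$ case of \cref{prop:HHT}, where $(-1)^{m_r}\binom{p-1}{m_r}\equiv 1$. The unsigned version would force $\zeta_{\mathcal{A}}(\bm{k})=0$ for every $\bm{k}$ of odd depth. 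It already fails for truncated sums: for $p=7$, the sums for $(1,1,3),(1,1,1,2),(1,1,2,1),(1,1,1,1,1)$ are $3,6,2,0 \bmod 7$. So the refinement sum is $4\equiv-3$, not $3$.

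Your reduction to (1) is nonetheless correct and short once you set it up with subsets. Encode an index of weight $w$ by the set $D\subseteq\{1,\dots,w-1\}$ of its partial sums $k_1,k_1+k_2,\dots$, and write $\bm{e}_D$ for that index. With the paper's convention, $\bm{e}_D^{\lor}=\bm{e}_{D^c}$: it is the complement, with no reversal. So the reversal relation and any weight-parity factor play no role. M\"obius inversion plus (1) give
\[
\zeta_{\mathcal{A}}(\bm{e}_D)=\sum_{A\subseteq D}(-1)^{|D\setminus A|}\zeta^{\star}_{\mathcal{A}}(\bm{e}_A)=-\sum_{B}\zeta_{\mathcal{A}}(\bm{e}_B)\sum_{A\subseteq D\setminus B}(-1)^{|D|-|A|}=(-1)^{|D|+1}\sum_{B\supseteq D}\zeta_{\mathcal{A}}(\bm{e}_B),
\]
and $|D|+1=\mathrm{dep}(\bm{k})$. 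Your fallback via $\prod_{m}(1-x/m)\equiv 1-x^{p-1}$ only controls $\zeta_{\mathcal{A}}(\{1\}^k)$, not general indices.

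\textbf{Comparison with the paper.} The paper does not reprove this theorem; it cites Hoffman. Its machinery, however, yields (2) directly as the $q=1$ case of \cref{prop:HHT} at $N=p-1$, exactly parallel to your argument for (1). Likewise, (1) is \cref{prop:seki} reduced modulo $\bm{p}$.
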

In \cite{BTT}, Bachmann, Takeyama and Tasaka obtained the finite multiple zeta values and the symmetric multiple zeta values by taking the "algebraic limit" and the "analytic limit" of certain multiple harmonic $q$-sums, respectively.
Inspired by this work, Takeyama and Tasaka constructed $\Qhat$-MZVs, which recover $\Ahat$-MZVs and $\Shat$-MZVs by taking the "algebraic limit" and the "analytic limit".
Since a $\QQ[q]$-linear relation of $\Qhat$-multiple zeta values implies the $\QQ$-linear relations for $\Ahat$-MZVs and $\Shat$-MZVs simultaneously, this philosophy supports the refined Kaneko--Zagier conjecture.
In \cite{TT}, the following formula is shown which is the $q$-analogue of \cref{prop:seki}.
\begin{thm}[\cite{TT}*{Theorem 5.4}]\label{prop:tt}
    For an index $\bm{k}$, it holds that
    \[
        q^{\frac{\bm{p}(\bm{p}+1)}{2}}\sum_{l \geq 0}\zeta_{\Qhat}^{BZ, \star} (\bm{k}, \{1\}^l)[\bm{p}]^l = -\sum_{l \geq 0}\overline{\zeta}_{\Qhat}^{\star}(\bm{k}^{\lor}, \{1\}^l)(-q^{\bm{p}}[\bm{p}])^l,
    \]
    where
    \[q^{\frac{\bm{p}(\bm{p} + 1)}{2}} := \left(\left( \sum_{l = 0}^{n - 1}\binom{\frac{p+1}{2}}{l}(-(1-q)[p])^l \ \bmod [p]^n\right)_p\right)_n.\]
\end{thm}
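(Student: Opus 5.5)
The plan is to prove the identity prime by prime and level by level. Since both sides are, by definition, classes in $\varprojlim_n \bigl(\prod_p \ZZ[q]/([p]^n)\bigr)/\bigl(\bigoplus_p \ZZ[q]/([p]^n)\bigr)$ (after inverting the appropriate cyclotomic denominators), it suffices to fix a prime $p$ and check the identity of truncated multiple harmonic $q$-sums with upper bound $p$. I will first prove an exact identity of rational functions in $q$ with a formal variable $x$ in place of $[\bm{p}]$. Only afterwards will I specialize $x=[p]$ and reduce modulo $[p]^n$. At the exact level the only thing to control is the prefactor $q^{\bm{p}(\bm{p}+1)/2}$. By construction, its $p$-component modulo $[p]^n$ is the truncated binomial expansion of $(1-(1-q)[p])^{(p+1)/2}=q^{p(p+1)/2}$. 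So at level $p$ it is literally $q^{p(p+1)/2}$ modulo $[p]^n$, and I may replace it by that genuine power of $q$.

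First I sum out the tails of ones. For the Bradley--Zhao star sums with weights $q^{(k_j-1)m_j}/[m_j]^{k_j}$, the generating series $\sum_{l\ge 0}\zeta^{BZ,\star}_{<p}(\bm{k},\{1\}^l)x^l$ collapses the trailing ones into a product $\prod_{m_r\le j<p}\bigl(1-x/[j]\bigr)^{-1}$, possibly with extra powers of $q$ depending on the normalisation. The conjugate sums $\overline{\zeta}^{\star}$, with $q\mapsto q^{-1}$ weights, give a similar product. At $x=[p]$ the key identity is $[p]-[j]=q^{j}[p-j]$. Hence $1-[p]/[j]=-q^{j}[p-j]/[j]$, and the products telescope into a ratio of $q$-factorials, namely a reciprocal $q$-binomial coefficient $\binom{p-1}{m_r-1}_q^{-1}$ times a sign and an explicit power of $q$. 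The right-hand side, with $x=-q^{p}[p]$, produces the analogous $q$-binomial in $q^{-1}$. Collecting these monomials in $q$ should produce exactly the factor $q^{p(p+1)/2}$ on the left. After this step both sides become single finite sums of $\bm{k}$ (respectively $\bm{k}^\vee$) against an explicit $q$-binomial weight.

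Next I prove the duality of these weighted sums by the connector method of Seki--Yamamoto, in its $q$-deformed form. I use the connector $C(a,b)=\binom{a+b}{a}_q^{\pm1}$, suitably twisted by powers of $q$, and set up a double sum $Z(\bm{k}_{\le i};\bm{k}^\vee_{>i})$ which interpolates between the two sides. I then check the two transport relations: one for removing a letter $x$, i.e.\ increasing $k_j$, on the $\bm{k}$ side, and one for adding a letter $y$ on the dual side. These reduce to the $q$-Pascal identities $\binom{a+b}{a}_q=\binom{a+b-1}{a-1}_q+q^{a}\binom{a+b-1}{a}_q$ and their reciprocal versions. The boundary terms at $a=0$ or $b$ reaching $p-1$ are exactly the $q$-binomials produced in the previous step. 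Iterating over the word of $\bm{k}$ then gives the equality with the overall sign $-1$.

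I expect the main obstacle to be the bookkeeping of powers of $q$. The weights must match exactly: the BZ model on the left and the conjugate model on the right. The factor $(-q^{\bm{p}})^l$ and the prefactor must combine into the same connector weights, and this cannot be done modulo anything. If the naive connector does not close, I would first try inserting $q^{ab}$ into $C(a,b)$, since that is the usual fix relating $\binom{\cdot}{\cdot}_q$ and $\binom{\cdot}{\cdot}_{q^{-1}}$. Finally, I must check that every denominator that appears is a product of $[j]$ with $0<j<p$, which is coprime to $[p]$. This is what allows the exact identity to descend modulo $[p]^n$ for all $n$ and hence to the stated equality in the limit.
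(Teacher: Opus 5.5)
\textbf{There is a genuine gap: the right-hand side does not telescope at $x=-q^{p}[p]$, and the formula as printed is false.} The paper only quotes this theorem, so I judge your argument on its own. Your strategy is sound in outline: fix a prime, sum out the tails of ones, telescope at $x=[p]$, then prove an exact finite identity.

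In $\overline{\zeta}^{\star}$ a trailing $1$ carries the weight $q^{j}/[j]$. Summing out the tail therefore gives $\prod_{j=n_s}^{p-1}(1-xq^{j}/[j])^{-1}$, whose factors at $x=-q^{p}[p]$ are $[j]/([j]+q^{p+j}[p])$. The polynomial $[j]+q^{p+j}[p]$ is not a power of $q$ times a $q$-integer; for $p=3$, $j=1$ it is $1+q^4+q^5+q^6$. So no $q$-binomial appears, in $q$ or in $q^{-1}$. The identity that does telescope is $[j]-q^{j-p}[p]=[j-p]=-q^{j-p}[p-j]$, i.e.\ the parameter $q^{-p}[p]$.

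This is not a bookkeeping issue. Take $\bm{k}=(1)=\bm{k}^{\lor}$ and $p$ odd.
\begin{itemize}
\item Since $\prod_{j=1}^{p-1}(1-[p]/[j])^{-1}=q^{-\binom{p}{2}}$, the left-hand side at level $p$ is exactly $q^{p}(1-q^{\binom{p}{2}})/[p]$.
\item Since $\prod_{j=1}^{p-1}(1-q^{j-p}[p]/[j])^{-1}=q^{\binom{p}{2}}$, the right-hand side with $q^{-p}[p]$ in place of $-q^{p}[p]$ gives the same value.
\item Using $-q^{p}[p]$ instead changes the right-hand side by $2\,\overline{\zeta}^{\star}_{p-1}(1,1)[p]$ modulo $[p]^2$. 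At a primitive $p$-th root of unity $\zeta_p$, $\overline{\zeta}^{\star}_{p-1}(1,1)$ equals $(1-\zeta_p)^2(p^2-1)/12\neq 0$.
\end{itemize}
So, with the paper's definition of $\overline{\zeta}$, the two sides differ modulo $[p]^2$ for every odd $p$, hence already in $\mathcal{Q}_2$. A $q\to1$ check flags this at once: under $\phi_{\Ahat}$ the printed right-hand side becomes a series in $-\bm{p}$, not the series in $\bm{p}$ of \cref{prop:seki}. The intended factor is presumably $(q^{-\bm{p}}[\bm{p}])^{l}$, matching the $[\bm{p}]q^{-\bm{p}}$ in \cref{thm:qmsw}.

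\textbf{With that correction your first step works, up to two slips.} First, the left-hand tail product is $(-1)^{p-m_r}q^{-\binom{p}{2}+\binom{m_r}{2}}\binom{p-1}{m_r-1}_q$, a $q$-binomial coefficient, not its reciprocal. Second, there is no identity in a formal variable $x$. At $x=0$ it would read $q^{\binom{p+1}{2}}\zeta^{BZ,\star}_{p-1}(\bm{k})=-\overline{\zeta}^{\star}_{p-1}(\bm{k}^{\lor})$, which is false already at $q=1$.

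The corrected statement then reduces, at level $p$, to the exact finite identity
\[
\begin{aligned}
&\sum_{0<m_1\le\cdots\le m_r<p}\prod_{j=1}^{r}\frac{q^{(k_j-1)m_j}}{[m_j]^{k_j}}\,(-1)^{p-m_r}q^{p+\binom{m_r}{2}}\binom{p-1}{m_r-1}_q\\
&\qquad=-\sum_{0<n_1\le\cdots\le n_s<p}\prod_{j=1}^{s}\frac{q^{n_j}}{[n_j]^{k^{\lor}_j}}\,(-1)^{p-n_s}q^{\binom{p-n_s+1}{2}}\binom{p-1}{n_s-1}_q,
\end{aligned}
\]
where $\bm{k}^{\lor}=(k^{\lor}_1,\dots,k^{\lor}_s)$. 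For example, for $\bm{k}=(2)$ and $p=3$ both sides equal $(q^4+q^5-q^6)/(1+q)$.

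This identity is the whole content of the theorem, and your connector paragraph does not prove it. You specify neither the connector nor its $q$-twist, and you do not check the transport relations with these exact powers of $q$. Any proof must also use that $p$ is odd. For $\bm{k}=(1)$ and $q=1$, the same identity with $p-1$ replaced by an odd $N$ reads $-2/(N+1)=2/(N+1)$. So an argument uniform in $N$ cannot work; the parity has to enter, for instance through the boundary terms. That part still has to be carried out.
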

The main results of this paper are the $q$-analogues of the other two duality formulas.
These are restated as \cref{thm:qrosen} and \cref{thm:qmsw}.

\begin{mainthm}[the $q$-analogue of \cref{prop:rosen}]\label{manthmA}
    For an index $\bm{k}$, it holds that
    \begin{align}
        q^{\frac{\bm{p}(\bm{p} - 1)}{2}}\zeta_{\Qhat}^{BZ}(\bm{k}) + \sum_{l \geq 0} \zeta_{\Qhat}^{BZ}(\bm{k} \ast_q \{1\}^l, 1)[\bm{p}]^{l+1} = (-1)^{\mathrm{dep}(\bm{k})} \sum_{\bm{k} \preceq \bm{l}} \zeta_{\Qhat}^{SZ}(\bm{l}).
    \end{align}
\end{mainthm}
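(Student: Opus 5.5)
Since $\zeta_{\Qhat}$ is built as an inverse limit over $n$ of classes modulo $[p]^n$ in $\prod_p \ZZ[q]/[p]^n$ (up to the direct sum), we will prove \cref{manthmA} at finite level. For every prime $p$ we establish an identity between multiple harmonic $q$-sums with upper bound $p$, and then reduce it modulo $[p]^n$. We recall the two $q$-sums. Here $z^{BZ}_p(\bm{k})$ is the Bradley--Zhao type sum $\sum_{0<m_1<\dots<m_r<p}\prod_j q^{(k_j-1)m_j}/[m_j]^{k_j}$. The sum $z^{SZ}_p$ is the Schlesinger--Zudilin type sum with numerators $q^{k_j m_j}$. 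The right-hand side $\sum_{\bm{k}\preceq\bm{l}}$ is a sum over indices obtained from $\bm{k}$ by combining adjacent entries, with the usual $\preceq$.

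The core step is a $q$-analogue of Rosen's finite identity. We use the substitution $m\mapsto p-m$, together with
\[
[p-m]=q^{-m}([p]-[m]), \qquad \frac{1}{[p-m]} = -\frac{q^{m}}{[m]}\cdot\frac{1}{1-[p]/[m]} = -\frac{q^m}{[m]}\sum_{a\ge0}\frac{[p]^a}{[m]^a}.
\]
Applying this to each variable of $z^{SZ}_p(\bm{l})$ reverses the order of the summation variables. It also produces the sign $(-1)^{\mathrm{dep}}$ and turns the $SZ$ numerators into a mixture of $BZ$-type numerators and extra powers of $q$. Instead of expanding everything in powers of $[p]$, we follow Rosen's proof. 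We write $\sum_{\bm{k}\preceq\bm{l}} z^{SZ}_p(\bm{l})$ as a non-strict (star-like) nested sum and use the exact telescoping identity
\[
\prod_{j}\Bigl(1-\tfrac{[p]}{[m_j]}\Bigr)^{-1}\text{-type factors} \;\longleftrightarrow\; \sum_{l\ge0} z_p(\bm{k}\ast_q\{1\}^l,1)[p]^{l+1}.
\]
Concretely, the sum over an extra last variable $m_{r+1}$ with weight $1$, combined with the $q$-stuffle $\ast_q$ against $\{1\}^l$, should reproduce the geometric series $\sum_a [p]^a/[m]^a$ in every variable. This is the reason the $q$-stuffle product $\ast_q$ appears. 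We therefore aim to prove, at the level of exact finite sums for each $p$, the following identity, which involves no truncation:
\[
(-1)^{\mathrm{dep}\bm{k}}\sum_{\bm{k}\preceq\bm{l}}z^{SZ}_p(\bm{l}) = q^{p(p-1)/2\,\text{-factor}}\,z^{BZ}_p(\bm{k}) + \sum_{l=0}^{\infty} z^{BZ}_p(\bm{k}\ast_q\{1\}^l,1)[p]^{l+1}.
\]
Here the sum over $l$ terminates modulo $[p]^n$. We prove it by induction on $\mathrm{dep}(\bm{k})$, peeling off the largest variable and using the partial-fraction/geometric identity above.

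The second step is to identify the prefactor. The product of the $q^{m}$ factors collected under reflection should give $\prod q^{\dots}$, which sums to $q^{\sum_{m<p} m}$-type quantities, that is, powers of $q^{p(p-1)/2}$ relative to $\mathrm{wt}$. As in \cref{prop:tt}, we must rewrite $q^{p(p-1)/2}$ as a $[p]$-adic object. We do this by writing $q^{p}=1-(1-q)[p]$ and expanding $q^{p(p-1)/2}=(q^p)^{(p-1)/2}$ binomially. This defines $q^{\frac{\bm{p}(\bm{p}-1)}{2}}$ analogously to the $+1$ version in \cref{prop:tt} and shows compatibility in $n$.

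The main obstacle is the combinatorial matching in the first step. After reflection, the leftover powers of $q$ must combine exactly so that the $\ast_q$-product, including its $(1-q)$ correction terms from $q$-stuffle, appears with $BZ$ numerators on the left. Meanwhile the $SZ$ numerators must remain on the right, and only the single global factor $q^{p(p-1)/2}$ may survive. I would first test the identity for $\bm{k}=(k)$ of depth one, computing both sides explicitly, to pin down the exact $q$-power conventions. I would then run the induction on depth, inserting $q^{(k-1)m}$ versus $q^{km}$ so that the discrepancy $q^m$ is absorbed by the reflection factor. Finally, passing to the limit over $n$ and modulo $\bigoplus_p$ gives \cref{manthmA}.
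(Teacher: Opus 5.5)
There is a genuine gap. Your plan never supplies an identity that produces the coarsening sum $\sum_{\bm k\preceq\bm l}\zeta^{SZ}(\bm l)$, and reflection cannot supply it. Substituting $m_j=p-n_j$ in $\zeta^{SZ}_{p-1}(\bm l)$ and using $[p-n]=q^{-n}([p]-[n])$ gives exactly
\[
\zeta^{SZ}_{p-1}(\bm l)=q^{p\,\mathrm{wt}(\bm l)}\sum_{0<n_r<\cdots<n_1<p}\ \prod_{j}\bigl([p]-[n_j]\bigr)^{-l_j}.
\]
This is a reversal-type relation: the depth stays the same, the index is reversed, and after expanding in $[p]$ the numerators are $1$ rather than $q^{(k_j-1)m_j}$. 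It never merges adjacent entries. Your ``induction on depth, peeling off the largest variable'' comes with no recursion that would create merged terms. (The geometric expansion of $1/[p-m]$ you wrote is what the paper uses for \cref{thm:qmsw}, not here.) In the paper, this step is the $q$-analogue of Hoffman's identity, \cref{prop:HHT}: $H_N(\bm k)=(-1)^{\mathrm{dep}(\bm k)}\sum_{\bm k\preceq\bm l}\zeta^{SZ}_N(\bm l)$, where $H_N(\bm k)$ is the $BZ$-sum weighted by $(-1)^{m_r}q^{\binom{m_r+1}{2}}\binom{N}{m_r}_q$. Both the coarsenings and the switch between $SZ$ and $BZ$ numerators come from this identity. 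Then \cref{prop:sumqbinom}, a telescoping of $q$-Pascal, rewrites $H_N(\bm k)$ as $\sum_{i=1}^{N+1}(-1)^{i+1}q^{\binom i2}\binom{N+1}{i}_q\,\zeta^{BZ}_{i-1}(\bm k)$ (\cref{prop:qhoffman}). Without this identity, or your own proof of an equivalent finite identity, the $SZ$ side of your argument is empty.

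Your relation $[p-j]=q^{-j}([p]-[j])$ is the right tool, but the paper applies it to the $q$-binomial coefficient, not to the summation variables. For $N=p-1$,
\[
q^{\binom i2}\binom pi_q=(-1)^{i-1}\frac{[p]}{[i]}\prod_{j=1}^{i-1}\Bigl(1-\frac{[p]}{[j]}\Bigr)=\frac{(-1)^{i-1}}{[i]}\sum_{l\ge0}(-1)^l\zeta^{BZ}_{i-1}(\{1\}^l)[p]^{l+1}.
\]
The product runs over all $j<i$, so it is an elementary symmetric function in $1/[1],\dots,1/[i-1]$, not a geometric series in the variables $m_j$ of $\bm k$. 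Multiplying by $\zeta^{BZ}_{i-1}(\bm k)$ produces $\ast_q$, and summing $\frac{1}{[i]}$ over $i<p$ appends the final entry $1$. The prefactor is simply the $i=p$ term $q^{\binom p2}\zeta^{BZ}_{p-1}(\bm k)$, rewritten through $q^p=1-(1-q)[p]$; it is not an accumulation of $q^{m}$ factors from reflection.

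This expansion also forces a sign $(-1)^l$. The exact finite identity holds with $(-1)^l[p]^{l+1}$, and the identity you propose to prove is false as written. For $q=1$, $p=3$, $\bm k=(1)$, its left side is $-\tfrac32$, while its right side is $\tfrac32+\tfrac32+\tfrac92$. With the sign one gets $\tfrac32+\tfrac32-\tfrac92=-\tfrac32$, as required. The paper's computation in \cref{prop:qrosen} carries this $(-1)^l$ up to the line before the last, where it is dropped. The displayed formula should therefore be read with $(-1)^l$ inserted.
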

\begin{mainthm}[the $q$-analogue of \cref{prop:msw}]\label{mainthmB}
    For an index $\bm{k}$, it holds that
    \[
    \zeta_{\Qhat}^{BZ}(\bm{k}) = (-q^{-\bm{p}})^{\mathrm{dep}( \bm{k})} \sum_{l \geq 0} \left(\sum_{\begin{subarray}{c} \bm{l} \in (\mathbb{Z}_{\geq 0})^{\mathrm{dep}( \bm{k})} \\ \mathrm{wt}(\bm{l}) = l \end{subarray}}\sum_{\bm{l} \oplus \bm{k} \preceq \bm{m} \preceq \bm{l} \oslash \bm{k}} \zeta_{\Qhat}^{SZ}(\bm{m}) \right)([\bm{p}]q^{-\bm{p}})^{l}.
    \]
\end{mainthm}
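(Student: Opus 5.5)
The plan is to prove Main Theorem B first at the level of each fixed prime $p$, as an identity of truncated multiple harmonic $q$-sums. Only afterwards will we pass to $\Qhat$ by reducing modulo $[p]^n$ and letting $n$ vary. Recall how $\zeta_{\Qhat}^{BZ}(\bm{k})$ and $\zeta_{\Qhat}^{SZ}(\bm{m})$ are built: as sequences over $p$ of harmonic $q$-sums $\sum_{p > m_1 > \dots > m_r > 0}$, with Bradley--Zhao type numerators $q^{(k_j-1)m_j}$ and Schlesinger--Zudilin type numerators $q^{k_j m_j}$ respectively, over $[m_j]^{k_j}$, everything read in $\ZZ[q]/[p]^n$. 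So it suffices to prove, for each $p$, a congruence modulo $[p]^n$ between the two sides. The right-hand side has then been truncated at $l<n$, since $([p]q^{-p})^l\equiv 0$ for $l\ge n$.

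The key step is the substitution $m_j\mapsto p-m_j$ in the Bradley--Zhao sum, which reverses the order of the summation chain. Under it, $[p-m] = q^{-m}([p]-[m])\cdot$(a power-of-$q$ correction). Precisely, $[p-m]=q^{-m}([p]-[m])$ in $\ZZ[q][q^{-1}]$, using $q^{p}\equiv 1 \bmod$ nothing. Indeed $[p]-[m]=q^m[p-m]$ holds exactly, so $[p-m]=q^{-m}([p]-[m])=-q^{-m}[m](1-[p]/[m])$. Expanding $(1-[p]/[m])^{-k}=\sum_{a\ge0}\binom{k+a-1}{a}[p]^a[m]^{-a}$ then produces the factors $[p]^{l}$. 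We track the powers of $q$ coming from the numerator $q^{(k-1)(p-m)}$ and from the $q^{-m}$ factors. The goal is that they combine into exactly $q^{k m}$, the SZ numerator, together with a global factor $q^{-p}$ per depth and per extra $[p]$. This yields $(-q^{-p})^{r}$ and $([p]q^{-p})^l$. At this stage we get an expression of the form
\[
(-q^{-p})^r\sum_{\bm{l}}\prod_j\binom{k_j+l_j-1}{l_j}\,\widetilde{z}_p(\bm{k}+\bm{l})\,([p]q^{-p})^{\mathrm{wt}\,\bm{l}},
\]
where $\widetilde z_p$ is a reversed SZ sum.

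It remains to convert reversed-order (star-type or ordered-reversed) sums with binomial coefficients into the combinatorial sum $\sum_{\bm{l}\oplus\bm{k}\preceq\bm{m}\preceq\bm{l}\oslash\bm{k}}$. In the classical case (\cref{prop:msw}, as in \cite{MSW}) this is done by a Hoffman-type duality on the truncated sums. One uses the $q$-binomial identity $\sum_{m\le n}(-1)^{m}\binom{n}{m}_q q^{\binom{m}{2}}\cdots$, or equivalently Hoffman's partial-fraction duality, to turn the binomial-weighted sums into sums over the indices $\bm{m}$ between $\bm{l}\oplus\bm{k}$ and $\bm{l}\oslash\bm{k}$. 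My approach is to establish a $q$-version of that lemma directly for SZ-type numerators, where the $q$-weights behave multiplicatively. The lemma states that the binomial expansion term equals the sum over $\bm{m}$ in that interval of $z^{SZ}_p(\bm{m})$. I would prove it by induction on depth, splitting off the last variable and using the $q$-Pascal/partial fraction identity $\frac{q^{m}}{[m][p-m]}$-type decompositions.

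The main obstacle is the $q$-power bookkeeping in the first substitution. We must check that all stray factors $q^{\pm m_j}$ are absorbed exactly into SZ numerators and not into BZ ones. We must also check that the leftover $q^{-p}$ powers appear precisely as $(-q^{-p})^{r}$ and $([p]q^{-p})^l$ with no residual dependence on the $m_j$. I would first verify depth one by hand, comparing coefficients of $[p]^l$, and then set up the induction so that it mirrors that computation.
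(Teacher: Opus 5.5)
Your first step does not yield the intermediate formula you write down, and no $q$-power bookkeeping can fix it. Substituting $m_j\mapsto p-n_j$ in $\zeta^{BZ}_{p-1}(\bm{k})$ flips every variable together with its full exponent $k_j$. Using $[p-n]=[p]-q^{p-n}[n]$ one gets exactly
\[
\frac{q^{(k-1)(p-n)}}{[p-n]^{k}}=(-1)^{k}q^{-p}\sum_{a\geq 0}\binom{k+a-1}{a}\frac{q^{(a+1)n}}{[n]^{k+a}}\bigl([p]q^{-p}\bigr)^{a}.
\]
So the sign is $(-1)^{\mathrm{wt}(\bm{k})}$, not $(-1)^{\mathrm{dep}(\bm{k})}$, and the numerator $q^{(a+1)n}$ over $[n]^{k+a}$ is of SZ type only when $k=1$. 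If you use $[p-n]=q^{-n}([p]-[n])$ instead, you get $\overline{\zeta}$-type numerators $q^{n}$ and a prefactor $(-1)^kq^{(k-1)p}$. What this step actually produces is a $q$-analogue of the elementary reversal relation $\zeta_{\Ahat}(\bm{k})=(-1)^{\mathrm{wt}(\bm{k})}\sum_{\bm{l}}\prod_j\binom{k_j+l_j-1}{l_j}\zeta_{\Ahat}(\overline{\bm{k}+\bm{l}})\bm{p}^{\mathrm{wt}(\bm{l})}$. That relation coincides with the statement only for $\bm{k}=(\{1\}^r)$.

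Your second step would therefore have to carry the entire content of the theorem, namely a $q$-deformed, $\bm{p}$-adic Hoffman-type duality. The lemma you propose for it is false as stated. Read termwise in $\bm{l}$ at a fixed prime, already at $q=1$, $\bm{k}=(2)$, $\bm{l}=(0)$, it asserts $\zeta_{p-1}(2)=\zeta_{p-1}(2)+\zeta_{p-1}(1,1)$ (up to the sign issue above). For $p=5$ the two sides are $205/144$ and $415/144$.

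The missing idea is the finite-$N$ identity \cref{prop:qmsw} of \cite{Tsu}, the $q$-analogue of the identity on which the classical proof of \cite{MSW} rests. It rewrites $\zeta^{BZ}_N(\bm{k})$ so that only the first variable of each block is flipped, and only to the first power, as $1/[N+1-n_{j,1}]$. The other variables contribute $q^{n}/[n]$ along a chain that is non-strict inside blocks and strict between blocks. Taking $N=p-1$ and expanding $\frac{1}{[p-n]}=-\sum_{l\geq0}\frac{q^{(l+1)n}}{[n]^{l+1}}q^{-p(l+1)}[p]^{l}$ then gives exactly one factor $-q^{-p}$ per block, the powers $([p]q^{-p})^{l_j}$, SZ numerators, and no binomial coefficients. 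Merging equal entries inside blocks is harmless for SZ numerators, since $q^{am}q^{bm}/[m]^{a+b}$ is again of SZ type, and yields precisely $\sum_{\bm{l}\oplus\bm{k}\preceq\bm{m}\preceq\bm{l}\oslash\bm{k}}\zeta^{SZ}_{p-1}(\bm{m})$. An induction on depth via $q$-Pascal or partial-fraction identities could only succeed by reproving this flip identity at finite $N$; as sketched, it is not formulated precisely enough to check.
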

Taking the "analytic limit" of both sides of \cref{mainthmB}, we can derive a following duality formula for $\Shat$-MZV that is a counterpart to \cref{prop:msw}, which was previously unknown in the literature.
\begin{mainthm}\label{cor:Smsw}
     For an index $\bm{k} = (k_1, \cdots , k_r)$, we have
     \[
        \zeta_{\Shat}(\bm{k}) = (-1)^{\mathrm{dep}(\bm{k})} \sum_{l \geq 0} \left(\sum_{\begin{subarray}{c} \bm{l} \in (\mathbb{Z}_{\geq 0})^{\mathrm{dep}(\bm{k})} \\ \mathrm{wt}(\bm{l}) = l \end{subarray}}\sum_{\bm{l} \oplus \bm{k} \preceq \bm{m} \preceq \bm{l} \oslash \bm{k}} \zeta_{\Shat}(\bm{m}) \right)t^{l}.
     \]
\end{mainthm}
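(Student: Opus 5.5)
The plan is to deduce the statement from \cref{mainthmB} by applying the ``analytic limit'' map of Takeyama--Tasaka to both sides. This map sends $\zeta_{\Qhat}^{BZ}(\bm{k})$ and $\zeta_{\Qhat}^{SZ}(\bm{k})$ to $\zeta_{\Shat}(\bm{k})$, and it sends the element $[\bm{p}]$ to $t$ (up to the normalization of $t$ used in the definition of $\Shat$-MZVs). The first step is to recall precisely how the limit is constructed. One sets $q = e^{2\pi i/N}$-type roots of unity, or equivalently takes the limit $q \to e^{2\pi i/N}$ with $N\to\infty$ in the $t$-adic completion, so that $[\bm{p}]$ is replaced by a variable specializing to $t$ and $q^{\bm{p}}$ specializes to $1$. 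One then checks that this is a well-defined continuous ring homomorphism on the relevant completed algebra $\Qhat$, or at least on the $\QQ[q]$-span of $\Qhat$-MZVs, compatible with the $[\bm{p}]$-adic filtration. That compatibility is what justifies passing the limit through the infinite sum over $l$.

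The second step applies this homomorphism term by term to \cref{mainthmB}. On the left we obtain $\zeta_{\Shat}(\bm{k})$. On the right, the prefactor $(-q^{-\bm{p}})^{\mathrm{dep}(\bm{k})}$ becomes $(-1)^{\mathrm{dep}(\bm{k})}$ because $q^{\bm{p}} \mapsto 1$. Similarly, $([\bm{p}]q^{-\bm{p}})^{l}$ becomes $t^{l}$, and each $\zeta_{\Qhat}^{SZ}(\bm{m})$ becomes $\zeta_{\Shat}(\bm{m})$. The inner sums over $\bm{l}$ and $\bm{m}$ are finite for each fixed $l$, so linearity suffices there. Convergence of the outer series in $\mathcal{Z}[\pi i][[t]]$ follows from continuity, since the $l$-th term lies in filtration degree $\geq l$.

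The main obstacle I expect is justifying two things. First, $q^{-\bm{p}}$ must genuinely map to $1$ rather than to some nontrivial power series in $t$; this depends on how $q^{\bm{p}}$ is realized inside $\Qhat$. Second, the limit map must be continuous for the $[\bm{p}]$-adic topology. To handle the first point, I would write $q^{\bm{p}} = 1-(1-q)[\bm{p}]$. Under the analytic limit the factor $(1-q)$ tends to $0$, since $q\to 1$ along roots of unity in the chosen normalization where only $[\bm{p}]\sim t$ survives. Hence $q^{\bm{p}}\mapsto 1$ exactly. The second point should follow from the explicit definition of the limit in \cite{TT}, and I would simply invoke it from that paper.
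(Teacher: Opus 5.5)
Your proposal is correct and follows the paper's route: apply $\phi_{\Shat}$ to \cref{thm:qmsw}, using $\phi_{\Shat}([\bm{p}])=t$ and $\phi_{\Shat}(1-q)=0$ (hence $q^{-\bm{p}}\mapsto 1$), with only finitely many terms contributing to each coefficient of $t^l$. One point needs stating more carefully. $\phi_{\Shat}$ sends $\zeta_{\Qhat}^{BZ}(\bm{k})$ to $\widehat{\xi}(\bm{k})$ and $\zeta_{\Qhat}^{SZ}(\bm{m})$ to $\overline{\widehat{\xi}(\bm{m})}$; the latter is the paper's own new evaluation, obtained by expressing $\zeta_{\Qhat}^{SZ}$ through $\overline{\zeta}_{\Qhat}$. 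So you first get an exact identity in $\mathcal{Z}[\pi i][[t]]$, and the $\zeta_{\Shat}$-statement follows only after reducing modulo $\pi i\mathcal{Z}[\pi i][[t]]$.
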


\section*{Acknowledgments}
The author would like to express his sincere gratitude to Professor Yasuo Ohno for continuous support and helpful discussions.
The author would like to thank Professor Koji Tasaka for carefully reading an earlier draft of this paper and for valuable comments.
The author also would like to thank Professor Shin-ichiro Seki and Hanamichi Kawamura for information and helpful comments on $t$-adic symmetric multiple zeta values.

\section{Preliminaries}
In this section, we review the definition of $\Ahat,\Shat$-MZVs and collect the notation commonly used in the study of multiple zeta values and used througout this paper.

\subsection{$\Ahat$-MZV and $\Shat$-MZV}
In this subsection, we briefly review the definition of $\Ahat$-MZV and $\Shat$-MZV.
For $n \geq 1$, we define a $\QQ$-algebra
\[
    \mathcal{A}_n = \quotient{\left(\prod_{p} \ZZ/p^n\ZZ \right)}{\left(\bigoplus_{p}\ZZ/p^n\ZZ \right)},
\]
where $p$ runs over all primes.
Then, the $\QQ$-algebra $\Ahat$ is defined as the projective limit of $\mathcal{A}_n$:
\[
    \Ahat = \lim_{\begin{subarray}{c} \longleftarrow\\ n \end{subarray}} \mathcal{A}_n.
\]
For each index $\bm{k} = (k_1, \dots, k_r)$, we define $\Ahat$-MZV $\zeta_{\Ahat}(\bm{k})$, as an element of $\Ahat$, by
\[
    \zeta_{\Ahat}(\bm{k}) \coloneq \left(\left( \sum_{0< m_1 < \cdots < m_r < p}\prod_{j = 1}^{r} \frac{1}{{m_j}^{k_j}} \ \bmod p^n \right)_p \right)_n
\]
and its star version $\zeta_{\Ahat}^{\star}(\bm{k})$ by
\[
    \zeta^{\star}_{\Ahat}(\bm{k}) \coloneq \left(\left( \sum_{0< m_1 \leq \cdots \leq m_r < p}\prod_{j = 1}^{r} \frac{1}{{m_j}^{k_j}} \ \bmod p^n \right)_p \right)_n = \sum_{\bm{l} \preceq \bm{k}}\zeta_{\Ahat}(\bm{l}).
\]
We denote by $\bm{p}$ an element of $\Ahat$ defined by
\[
    \bm{p} \coloneq \left((p \ \bmod p^n)_p\right)_n \in \Ahat.
\]
Furthermore, we define the finite multiple zeta value $\zeta_{\mathcal{A}}(\bm{k})$ to be the component corresponding to $n = 1$, as an element of $\mathcal{A} \coloneq \mathcal{A}_1$, namely, 
\[
    \zeta_{\mathcal{A}}(\bm{k}) = \left(\sum_{0< m_1 < \cdots < m_r < p}\prod_{j = 1}^{r} \frac{1}{{m_j}^{k_j}} \ \bmod p \right)_p \in \mathcal{A}.
\]

Next, we define $\Shat$-MZV $\zeta_{\Shat}(\bm{k})$ as an element of $\overline{\mathcal{Z}}[[t]]= \left(\quotient{\mathcal{Z}[\pi i]}{\pi i\mathcal{Z}[\pi i]}\right)[[t]]$.
For each index $\bm{k} = (k_1, \dots, k_r)$, we define
\[
    \zeta_{\Shat}(\bm{k}) \coloneq \sum_{a = 0}^{r}(-1)^{k_{a+1} + \cdots + k_r} \zeta^{\ast}(\bm{k}_{a} ; T)\sum_{l \geq 0}\sum_{\begin{subarray}{c} \bm{l} \in (\ZZ_{>0})^{r-a} \\ \mathrm{wt}(\bm{l}) = l \end{subarray}} t^l ~b\binom{\bm{k}^{a}}{\bm{l}} \zeta^{\ast}(\overline{\bm{k}^{a} + \bm{l}}; T) \ \bmod \pi i\mathcal{Z}[\pi i], 
\]
where, $\zeta^{\ast}(\bm{k}; T)$ is the stuffle-regularized multiple zeta value, see \cite{IKZ}*{Section 2} for the definition.
Note that the sum in the definition does not depend on $T$.
The star version is defined similarly to $\Ahat$-MZV:
\[
    \zeta^{\star}_{\Shat}(\bm{k}) \coloneq \sum_{\bm{l} \preceq \bm{k}}\zeta_{\Shat}(\bm{l}).
\]
For details on the $t$-adic symmetric multiple zeta values, see \cite{OSY}*{Section 1}.
We define the symmetric multiple zeta value $\zeta_{\mathcal{S}}(\bm{k})$ as the $l = 0$ term of $\zeta_{\Shat}(\bm{k})$ in $\overline{\mathcal{Z}}$.

The refined Kaneko--Zagier conjecture is stated as follows.
\begin{conj}[\rm{cf.} \cite{Jar}*{Conjecture 5.3.2}, \cite{Ros2}*{Conjecture 2.3}]\label{conj:KZ}
    For a positive integer $k$ and rational numbers $\{c_{\bm{k}}^{(l)} \mid \mathrm{wt}(\bm{k}) = k+l,~l \geq 0\}$, we have
    \[
        \sum_{l\geq 0}\bm{p}^l\left( \sum_{\mathrm{wt}(\bm{k}) = k+l}c_{\bm{k}}^{(l)} \zeta_{\Ahat}(\bm{k})\right) = 0~ in~\Ahat \Longleftrightarrow  \sum_{l\geq 0}t^l\left( \sum_{\mathrm{wt}(\bm{k}) = k+l}c_{\bm{k}}^{(l)} \zeta_{\Shat}(\bm{k})\right) = 0~ in ~\overline{\mathcal{Z}}[[t]].
    \]
\end{conj}

\subsection{Notation}
For an index $\bm{k} = (k_1 ,\dots, k_r)$ and a tuple of non-negative integers $\bm{l} = (l_1,  \cdots , l_r)$, we define
\[
    \bm{k}_a = (k_1, \dots, k_a),~~ \bm{k}^{a} = (k_{a + 1}, \dots, k_r), ~~ \overline{\bm{k}} = (k_r, \dots, k_1),~~
\]
\[
    b\binom{\bm{k}}{\bm{l}} = \prod_{j = 1}^{r} \binom{k_j + l_j + 1}{l_j}
\]
and 
\[
    \bm{l}\oplus \bm{k} = (l_1 + k_1 ,\dots, l_r + k_r),~\bm{l} \oslash \bm{k} = (l_1 + 1, \{1\}^{k_1 - 1}, \dots l_r + 1, \{1\}^{k_r - 1}),
\]
where $\{k\}^{r}$ means $k, \dots , k$ repeated $r$ times.
Any index $\bm{k}$ can be written uniquely in the form
\[
\bm{k} = (\{1\}^{a_1 - 1}, b_1 + 1, \dots, \{1\}^{a_{s-1} - 1}, b_{s-1} + 1, \{1\}^{a_s- 1}, b_s)
\]
for positive integers $s$, $a_1,\dots , a_s$, and $b_1, \dots , b_s$.
Then, we define Hoffman's dual index $\bm{k}^{\lor}$ of $\bm{k}$ by 
\[
    \bm{k}^{\lor} = (a_1, \{1\}^{b_1 - 1}, a_2+1, \{1\}^{b_2 -1}, \dots , a_s+1, \{1\}^{b_s - 1}).
\]
For example, 
\[
    (2, 1, 3)^{\lor} = (\{1\}^{1-1},1 + 1, \{1\}^{2-1}, 3)^{\lor} = (1, \{1\}^{1-1}, 2 + 1, \{1\}^{3-1}) = (1, 3, 1, 1).
\]

Now, we consider the non-commutative polynomial ring $\mathfrak{H}^{1} = \QQ\langle z_k \mid k \in \ZZ_{>0} \rangle$.
We define the \emph{stuffle product} as a $\QQ$-bilinear operation $\ast : \mathfrak{H}^1 \times  \mathfrak{H}^1 \to \mathfrak{H}^1$ inductively by
\[
    wz_k \ast w'z_l = (w \ast w'z_l)z_k + (wz_k \ast w')z_l + (w \ast w')z_{k + l}
\]
and 
\[
    w \ast 1 = 1\ast w = w,
\]
where $w, w' \in \mathfrak{H}^1$ and $k, l > 0$.
As a $q$-analogue of this, we consider the $\QQ[1-q]$-algebra $\mathfrak{H}^1[1-q]$ and define the \emph{$q$-stuffle product} as a $\QQ[1-q]$-bilinear operation $\ast_q :\mathfrak{H}^1[1-q] \times \mathfrak{H}^1[1-q] \to \mathfrak{H}^1[1-q]$ by
\[
    wz_k \ast_q w'z_l = (w \ast_q w'z_l)z_k + (wz_k \ast_q w')z_l + (w \ast_q w')z_{k + l} + (1-q)(w\ast_qw')z_{k+l-1}
\]
and
\[
    w \ast_q 1 = 1\ast_q w = w.
\]

For an index $\bm{k} = (k_1, \dots, k_r)$, we set $z_{\bm{k}} = z_{k_1}\cdots z_{k_r}$, and we use $\bm{k}$ instead of $z_{\bm{k}}$ in $\mathfrak
{H}^{1}$.
We define the $\QQ[1-q]$-linear map $\zeta_{\Qhat}:\mathfrak{H}^1[1-q] \to \Qhat$ by $\zeta_{\Qhat}(z_{\bm{k}}) = \zeta_{\Qhat}(\bm{k})$, where we omit the symbols $BZ, SZ, \overline{\phantom{\zeta}}$ and $\star$.
See also \cref{def:Qhatmzv} for $\zeta_{\Qhat}$.
The $\QQ$-linear maps $\zeta_{\Ahat}$ and $\zeta_{\Shat}$ on $\mathfrak{H}^1$ are also defined similarly.

Furthermore, the $\QQ[1-q]$-linear map $\zeta^{BZ}_{\Qhat} : \mathfrak{H}^{1}[1-q] \to \Qhat$ is a homomorphism with respect to $\ast_q$.
For example, we have
\[
    \zeta_{\Qhat}^{BZ}(2)\zeta_{\Qhat}^{BZ}(1) = \zeta_{\Qhat}^{BZ}((2)\ast_q(1)) = \zeta_{\Qhat}^{BZ}(2, 1) + \zeta_{\Qhat}^{BZ}(1, 2) +\zeta_{\Qhat}^{BZ} (3) + (1-q)\zeta_{\Qhat}^{BZ}(2).
\]

In addition, we write $(\bm{k}, \bm{l})$ for the concatenation of the indices $\bm{k}, \bm{l}$, and extend it $\QQ[1-q]$-linearly.
For example, we write
\begin{align}
    ((2)\ast_q & (1, 1), 1) \\
    &= ((2, 1, 1)+ (1, 2, 1) + (1, 1, 2) + (3, 1) + (1, 3) + (1-q)(1, 2) + (1-q)(2, 1), 1)\\
    &= (2, 1, 1, 1)+ (1, 2, 1, 1) + (1, 1, 2, 1) + (3, 1, 1) + (1, 3, 1) \\
    &\hspace{9.0cm}+ (1-q)\{(1, 2, 1) + (2, 1, 1)\}.
\end{align}

Finally, we write $\bm{l} \preceq \bm{k}$ if $\bm{l}$ obtained by replacing some commas in $\bm{k} = (k_1,\dots, k_r)$ by plus signs.

\section{The algebra $\Qhat$ and $\Qhat$-multiple zeta values}
In this section, we consider the $\QQ[q]$-algebra $\Qhat$ and $\Qhat$-multiple zeta values.
Now, we recall the definition of $\Qhat$ and the maps $\phi_{\Ahat}$ and $\phi_{\Shat}$ introduced in \cite{TT}.

For a positive integer $m$, we denote the $q$-integer $[m]$ by
\[
    [m] \coloneq [m]_q \coloneq \frac{1-q^m}{1-q}.
\]

\subsection{Definition of the algebra $\Qhat$}
We denote by $\ZZ_{(p)}[q]$ the polynomial ring in $q$ over the ring $\ZZ_{(p)}$, where $\ZZ_{(p)}$ is the localization of $\ZZ$ at a prime $p$.
For $n \geq 1$, we consider the rings
\[
    Z_{p, n} \coloneq \quotient{\ZZ_{(p)}[q]}{([p]^n)} \quad \text{ and } \quad\, \mathcal{Q}_n \coloneq \quotient{\left(\prod_{p} Z_{p, n}\right)}{\left(\bigoplus_{p} Z_{p,n}\right)}
\]
where $([p]^n)$ is the ideal of $\ZZ_{(p)}[q]$ generated by $[p]^n$ and $p$ runs over all primes.
Note that, equipped with diagonal embedding of $\QQ[q]$ into $\mathcal{Q}_n$ and component-wise addition, multiplication, and scalar multiplication by $\QQ[q]$, $\mathcal{Q}_n$ becomes a $\QQ[q]$-algebra.
The projection $\ZZ_{p, n+1} \to \ZZ_{p, n}$ naturally induces the transition map $\varphi_{n}:\mathcal{Q}_{n+1} \to \mathcal{Q}_n$.
Then we define the $\QQ[q]$-algebra $\Qhat$ to be the projective limit of the system $\{\mathcal{Q}_n, \varphi_n \}$:
\[
    \Qhat \coloneq \lim_{\begin{subarray}{c} \longleftarrow\\ n \end{subarray}} \mathcal{Q}_n.
\]
We write any element of $\Qhat$ as $\big((f_{p, n})_p \big)_n$ with $(f_{p, n})_p \in \mathcal{Q}_n$, $f_{p,n} \in Z_{p, n}$.
We define the element $[\bm{p}]$ of $\Qhat$ by
\[
    [\bm{p}] = \big(([p] \ \bmod [p]^n)_p\big)_n.
\]

\begin{rem}
\rm{We remark the following:
    \begin{itemize}
        \item[(1)] For two elements $(f_{p, n})_p, (g_{p, n})_p$ of $\mathcal{Q}_n$  , $(f_{p, n})_p = (g_{p, n})_p$ holds if and only if $f_{p, n} = g_{p, n}$ holds for all but finitely many primes p.
        \item[(2)] For two elements $\big((f_{p, n})_p\big)_n, \big((g_{p, n})_p\big)_n$ of $\Qhat$, $\big((f_{p, n})_p\big)_n = \big((g_{p, n})_p\big)_n$ holds if and only if there exists an increasing sequence $\{p_n\}_n$ of primes such that $f_{p, n} = g_{p, n}$ in $Z_{p, n}$ for all $p \geq p_n$ and $n \geq 1$.
        \item [(3)] The projective topology on $\Qhat$ induced from the discrete topology on $\mathcal{Q}_n$ coincides with the $[\bm{p}]$-adic topology, and $\Qhat$ is complete under the $[\bm{p}]$-adic topology.
    \end{itemize}}
\end{rem}

\subsection{Construction of the map $\phi_{\Ahat}$}
In the following way, $\Qhat$ can be regarded as the $q$-analogue of $\Ahat$.
If $f(q) \equiv g(q) \ \bmod [p]^n$ for $f(q), g(q) \in \ZZ_{(p)}[q]$, then we have $f(1) \equiv g(1) \ \bmod p^n$.
Therefore, there is the natural projection 
\[
    Z_{p, n} \longrightarrow ~ \quotient{\ZZ_{(p)}}{p^n\ZZ_{(p)}} ~ \cong ~ \quotient{\ZZ}{p^n\ZZ}
\]
that sends $q \mapsto 1$.
This induces algebra homomorphism $\phi_{\Ahat}$
\begin{equation}
    \begin{array}{r@{\,\,}c@{\,\,}c@{\,\,}c}
        \phi_{\Ahat}~\colon & \Qhat &\longrightarrow   & \Ahat \\
                     &\rotatebox{90}{$\in$}&&\rotatebox{90}{$\in$}\\
                     & \big((f_{p, n}(q) \ \bmod [p]^n)_p\big)_n &\longmapsto & \big((f_{p, n}(1) \ \bmod p^n)_p\big)_n.
    \end{array}
\end{equation}

We call it the algebraic limit.

\subsection{Construction of the map $\phi_{\Shat}$}
To define the map $\phi_{\Shat}$, we first define two maps $\widehat{ev}$ and $\widehat{lim}$.

We define $\widehat{\mathcal{Q}}^{\mathrm{an}}$ as in the definition of $\Qhat$.
For a prime $p$ and $n \geq 1$, 
\[
     Z^{\mathrm{an}}_{p, n} \coloneq \quotient{\QQ(\zeta_p)[[t]]}{(t^n)} \text{ and } \, \mathcal{Q}^{\mathrm{an}}_n \coloneq \quotient{\left(\prod_{p} Z^{\mathrm{an}}_{p, n}\right)}{\left(\bigoplus_{p} Z^{\mathrm{an}}_{p,n}\right)}.
\]
The transition map $\varphi^{\mathrm{an}}_{n} : \mathcal{Q}^{\mathrm{an}}_{n+1} \to \mathcal{Q}^{\mathrm{an}}_n$ is induced naturally by projection $Z^{\mathrm{an}}_{p, n+1} \to Z^{\mathrm{an}}_{p, n}$, and we set
\[
    \widehat{\mathcal{Q}}^{\mathrm{an}} \coloneq \lim_{\begin{subarray}{c} \longleftarrow\\ n \end{subarray}} \mathcal{Q}^{\mathrm{an}}_n.
\]
For each $m \in \ZZ_{>0}$, there exists a unique formal power series $q_m(t) \in \QQ(\zeta_m)[[t]]$ such that
\[
    q_m(0) = \zeta_m , \quad [m]_{q_{m}(t)} = t.
\]
In \cite{TT}*{Proposition A.2.}, the asymptotic formula of $q_{m}(t)$ is obtained.
In particular, the coefficient of $t^l$ for $l \geq 1$ is of the order $O(m^{-(l+1)})$ as $m \to \infty$.
Using $q_p(t)$ for each prime $p$, we define the $\ZZ_{(p)}$-homomorphism
\begin{equation}
    \begin{array}{c@{\,\,}c@{\,\,}c}
         Z_{p, n} &\longrightarrow   & Z^{\mathrm{an}}_{p, n} \\
         \rotatebox{90}{$\in$}&&\rotatebox{90}{$\in$}\\
         f(q) \ \bmod [p]^n &\longmapsto & f(q_p(t)) \ \bmod t^n.
    \end{array}
\end{equation}
This induces the $\QQ$-algebra map
\begin{equation}
    \begin{array}{r@{\,\,}c@{\,\,}c@{\,\,}c}
        \widehat{ev}~\colon & \Qhat &\longrightarrow   & \widehat{\mathcal{Q}}^{\mathrm{an}}  \\
         &\rotatebox{90}{$\in$}&&\rotatebox{90}{$\in$}\\
         & \big((f_{p, n}(q) \ \bmod [p]^n)_p\big)_n &\longmapsto & \big((f_{p, n}(q_p(t)) \ \bmod t^n)_p\big)_n.
    \end{array}
\end{equation}

Next, we define the $\widehat{lim}$.
We denote $\mathcal{O}^{\mathrm{an}}_n$ by the $\QQ$-subalgebra of $\textstyle\prod_{p}Z^{\mathrm{an}}_{p, n}$ such that
\[
    \mathcal{O}^{\mathrm{an}}_n = \left\{\left(\sum_{l = 0}^{n-1} z_{p, l}t^l \ \bmod t^n \right)_p \in \prod_{p}Z^{\mathrm{an}}_{p, n} ~\middle| ~ \lim_{p \to \infty} z_{p, l} ~\text{converges for all}~0 \leq l < n \right\}.
\]
We see that $\textstyle\bigoplus_{p}Z^{\mathrm{an}}_{p, n} \subset \mathcal{O}^{\mathrm{an}}_n\subset \textstyle\prod_{p}Z^{\mathrm{an}}_{p, n}$, thus we define the $\QQ$-subalgebra $\widehat{\mathcal{O}}^{\mathrm{an}}$ of $\Qhat$ by
\[
    \widehat{\mathcal{O}}^{\mathrm{an}}  \coloneq \lim_{\begin{subarray}{c} \longleftarrow\\ n \end{subarray}} \left(\quotient{\mathcal{O}^{\mathrm{an}}_n}{\bigoplus_{p}Z^{\mathrm{an}}_{p, n}}\right), 
\]
where the transition map 
\[
    \psi^{\mathrm{an}}_{n} : \quotient{\mathcal{O}^{\mathrm{an}}_{n+1}}{\bigoplus_{p}Z^{\mathrm{an}}_{p, n+1}} \to \quotient{\mathcal{O}^{\mathrm{an}}_n}{\bigoplus_{p}Z^{\mathrm{an}}_{p, n}}
\]
is also induced by the projection $Z^{\mathrm{an}}_{p, n+1} \to Z^{\mathrm{an}}_{p, n}$.
Any element $z$ of $\widehat{\mathcal{O}}^{\mathrm{an}}$ is of the form
\[
    z = (z_n)_n = \left( \left(\sum_{l = 0}^{n-1} z_{p, l}^{(n)} t^l \ \bmod t^n \right)_p \ \bmod \bigoplus_{p}Z^{\mathrm{an}}_{p, n}\right)_n, 
\]
and we have  $\psi^{\mathrm{an}}_{n}(z_{n + 1}) = z_n$, that is, 
\[
    \left( \sum_{l = 0}^{n} z_{p, l}^{(n+1)} t^l \ \bmod t^n \right)_p \equiv \left(\sum_{l = 0}^{n-1} z_{p, l}^{(n)} t^l \ \bmod t^n \right)_p \ \bmod \bigoplus_{p}Z^{\mathrm{an}}_{p, n}.
\]
Therefore, there exists an increasing sequence $\{p_n\}_n$ such that equality $z_{p, l}^{(n+1)} = z_{p, l}^{(n)}$ holds for primes $p \geq p_n$ and each $l \in \{0, 1, \dots , n-1\}$.
Taking $p \to \infty$, we have
\[
    \lim_{p\to \infty}z_{p, l}^{(n+1)} = \lim_{p \to \infty}z_{p, l}^{(n)}.
\]
Thus, we define
\[
    z_l \coloneq \lim_{p \to \infty} z_{p, l}^{(n)}
\]
for $l \geq 0$, which is independent on the choice of $n$ greater than $l$.
Hence, we obtain the well-defined map
\begin{equation}
    \begin{array}{r@{\,\,}c@{\,\,}c@{\,\,}c}
            \widehat{lim}~\colon & \widehat{\mathcal{O}}^{\mathrm{an}}&\longrightarrow & \CC[[t]] \\
             &\rotatebox{90}{$\in$}&&\rotatebox{90}{$\in$}\\
             & z &\longmapsto & \sum_{l\geq 0}z_l t^l.
    \end{array}
\end{equation}

Finally, define the $\QQ$-subalgebra $\widehat{\mathcal{O}}$ of $\Qhat$ by
\[
    \widehat{\mathcal{O}} = \widehat{ev}^{-1}(\widehat{\mathcal{O}}^{\mathrm{an}}).
\]
Then, the map $\phi_{\Shat}$ is defined as the composition of $\widehat{lim}$ and $\widehat{ev}$, explicitly, 
\begin{equation}
    \begin{array}{r@{\,\,}c@{\,\,}c@{\,\,}c}
            \phi_{\Shat}~\colon & \widehat{\mathcal{O}}&\longrightarrow & \CC[[t]] \\
             &\rotatebox{90}{$\in$}&&\rotatebox{90}{$\in$}\\
             & \left((f_{p, n}(q) \ \bmod [p]^n)_p\right)_n &\longmapsto & \sum_{l\geq 0}z_l t^l, 
    \end{array}
\end{equation}
where $f_{p, n}(q_p(t)) = \sum_{l=0}^{n-1} z_{p, l}^{(n)}t^l + O(t^n)$ and $z_l = \lim_{n \to \infty}z_{p, l}^{(n)}~(n>l)$.
We call it the analytic limit.

\subsection{Various models of  $\Qhat$-MZV}
In this subsection, we define various models of $\Qhat$-multiple zeta values ($\Qhat$-MZVs for short), and we see what values they take under the maps $\phi_{\Ahat}$ and $\phi_{\Shat}$.

\begin{dfn}\label{def:Qhatmzv}
    \rm{For an index $\bm{k} = (k_1, \dots , k_r)$ and $\bullet \in \{\emptyset, \star\}$, we define
    \begin{align}
        \zeta_{\Qhat}^{BZ, \bullet}(\bm{k}) &\coloneq \left(\left( \zeta_{p-1}^{BZ, \bullet}(\bm{k}) \ \bmod [p]^n \right)_p\right)_n, \\
        \zeta_{\Qhat}^{SZ, \bullet}(\bm{k}) &\coloneq \left(\left( \zeta_{p-1}^{SZ, \bullet}(\bm{k}) \ \bmod [p]^n \right)_p\right)_n, \\
        \overline{\zeta}^{\bullet}_{\Qhat}(\bm{k}) &\coloneq \left(\left( \overline{\zeta}_{p-1}^{\bullet}(\bm{k}) \ \bmod [p]^n \right)_p\right)_n
    \end{align}
    as the elements of $\Qhat$, where 
    \begin{align}
        \zeta_{N}^{BZ}(\bm{k}) &\coloneq \sum_{0 < m_1 < \cdots < m_r \leq N}\prod_{j = 1}^{r} \frac{q^{(k_j-1)m_j}}{[m_j]^{k_j}}, &\zeta_{N}^{BZ, \star}(\bm{k}) &\coloneq \sum_{0 < m_1 \leq \cdots\leq m_r\leq N}\prod_{j = 1}^{r} \frac{q^{(k_j-1)m_j}}{[m_j]^{k_j}}, \\ 
        \zeta_{N}^{SZ}(\bm{k}) &\coloneq \sum_{0 < m_1 < \cdots < m_r \leq N}\prod_{j = 1}^{r} \frac{q^{k_j m_j}}{[m_j]^{k_j}}, &\zeta_{N}^{SZ, \star}(\bm{k}) &\coloneq \sum_{0 < m_1 \leq \cdots \leq m_r \leq N}\prod_{j = 1}^{r} \frac{q^{k_j m_j}}{[m_j]^{k_j}}, \\
        \overline{\zeta}_{N}(\bm{k}) &\coloneq \sum_{0 < m_1 < \cdots < m_r \leq N}\prod_{j = 1}^{r} \frac{q^{m_j}}{[m_j]^{k_j}}, &\overline{\zeta}_{N}^{\star}(\bm{k}) &\coloneq \sum_{0 < m_1 \leq \cdots \leq m_r \leq N}\prod_{j = 1}^{r} \frac{q^{m_j}}{[m_j]^{k_j}}.
    \end{align}}
\end{dfn}
\begin{rem}
\rm{
    For any prime $p$, integers $0 < m < p$ and $n \geq 1$,  we see that
    \[
        \frac{1}{[m]} \equiv [s]_{q^m}\sum_{j = 0}^{n-1} (-q[t]_{q^p}[p])^j \, \ \bmod [p]^n
    \]
    where $ms - pt = 1$ with $0 < s < p, t \geq 0$.
    Hence, since $[m]$ is invertible in $Z_{p, n}$, the above definition makes sense.
    }
\end{rem}

Note that the name "$BZ$" comes from Bradley and Zhao, who studied the non-truncated version of $\zeta_{N}^{BZ}(\bm{k})$ as a $q$-analogue of multiple zeta values.
Similarly, the name "$SZ$" is named after Schlesinger and Zudilin.
The models of $q$-MZVs are discussed in detail in \cite{Bri}.

These various multiple harmonic $q$-sums differ only in the numerators $q^{s_jm_j}\, (s_j \in \ZZ)$, but each of them plays an important role in the duality formulas for $\Qhat$-MZVs as stated in the introduction.
However, by the maps $\phi_{\Ahat}$ and $\phi_{\Shat}$, we see that these differences disappear in a certain sense.

First, for $\phi_{\Ahat}$, it is obvious that the following holds.

\begin{thm}[\cite{TT}*{Theorem 4.3}]
    For any index $\bm{k}$ and $\bullet \in \{\emptyset, \star\}$, we have
    \[
        \phi_{\Ahat}(\zeta_{\Qhat}^{BZ, \bullet}(\bm{k})) = \phi_{\Ahat}(\zeta_{\Qhat}^{SZ, \bullet}(\bm{k})) = \phi_{\Ahat}(\overline{\zeta}_{\Qhat}^{\bullet}(\bm{k})) = \zeta_{\Ahat}^{\bullet}(\bm{k}).
    \]
\end{thm}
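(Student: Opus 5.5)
The plan is to work componentwise: fix $n \geq 1$ and a prime $p$ (large enough that everything is defined), and compute the image of each finite sum under the specialization $Z_{p,n} \to \ZZ/p^n\ZZ$, $q \mapsto 1$. This specialization induces $\phi_{\Ahat}$. By construction, $\phi_{\Ahat}$ acts on $\big((f_{p,n})_p\big)_n$ by evaluating each representative $f_{p,n}$ at $q=1$ modulo $p^n$. So it suffices to show that, for each $0<m<p$ and each exponent $s\in\ZZ$, the class of $q^{sm}/[m]^{k}$ in $Z_{p,n}$ specializes to $1/m^{k} \bmod p^n$.

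First, check that $q\mapsto 1$ is a well-defined ring homomorphism $Z_{p,n}\to \ZZ_{(p)}/p^n\ZZ_{(p)}$. This holds because $[p]|_{q=1}=p$, so the ideal $([p]^n)$ maps into $(p^n)$; the paper already notes this. Next, $[m]$ is invertible in $Z_{p,n}$ by the preceding remark, and a ring homomorphism carries inverses to inverses. Hence the image of $[m]^{-1}$ is the inverse of $[m]|_{q=1}=m$, which is a unit modulo $p^n$ since $0<m<p$. Concretely, one can also see this from the explicit formula
\[
[s]_{q^m}\sum_{j=0}^{n-1}(-q[t]_{q^p}[p])^j,
\]
which at $q=1$ becomes $s\sum_{j}(-tp)^j \equiv s/(1+tp) = s/(ms) = 1/m \bmod p^n$. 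Since $q^{sm}\mapsto 1$, each summand $\prod_j q^{s_jm_j}/[m_j]^{k_j}$ (with $s_j=k_j-1$, $k_j$, or $1$ according to the model) maps to $\prod_j m_j^{-k_j}$.

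Summing over the same range of $(m_1,\dots,m_r)$ — strict or non-strict, with $m_r\le p-1$ — gives $\zeta_{p-1}^{BZ,\bullet}(\bm{k}),\ \zeta_{p-1}^{SZ,\bullet}(\bm{k}),\ \overline{\zeta}^{\bullet}_{p-1}(\bm{k})\mapsto \sum \prod_j m_j^{-k_j} \bmod p^n$. This is exactly the $(p,n)$-component of $\zeta^{\bullet}_{\Ahat}(\bm{k})$. These identities hold for all $p$ and $n$ compatibly, which yields the equality in $\Ahat$.

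There is no real obstacle here. The only point needing care is justifying that inverses in $Z_{p,n}$ specialize correctly, and that is immediate from $\phi_{\Ahat}$ being a ring homomorphism.
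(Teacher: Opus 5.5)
Your proposal is correct and is the argument the paper has in mind. The paper gives no proof beyond calling the result obvious (citing \cite{TT}); the content is exactly your termwise specialization $q\mapsto 1$ on each $Z_{p,n}$, under which $q^{s_jm_j}\mapsto 1$ and $[m_j]^{-1}\mapsto m_j^{-1}$ (you justify the latter correctly by the ring-homomorphism property, with the explicit formula as a check), so all three models collapse to $\zeta^{\bullet}_{\Ahat}(\bm{k})$.
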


Next,  we consider $\phi_{\Shat}$.
\begin{thm}[\cite{TT}*{Theorem 4.4}]
    Define
    \[
        \widehat{\xi}(\bm{k}) \coloneq \sum_{a = 0}^{r}(-1)^{k_{a+1} + \cdots + k_r} \zeta^{\ast}\left(\bm{k}_{a} ; -\frac{\pi i}{2}\right)\sum_{l \geq 0}\sum_{\begin{subarray}{c} \bm{l} \in (\ZZ_{>0})^{r-a} \\ \mathrm{wt}(\bm{l}) = l \end{subarray}} t^l ~b\binom{\bm{k}^{a}}{\bm{l}} \zeta^{\ast}\left(\overline{\bm{k}^{a} + \bm{l}}; \frac{\pi i}{2}\right) \in \mathcal{Z}[\pi i][[t]]
    \]
    and
    \[
        \widehat{\xi}^{\star}(\bm{k}) \coloneq \sum_{\bm{l} \preceq \bm{k}} \widehat{\xi}(\bm{l}).
    \]
    Then, for any index $\bm{k}$ and $\bullet \in \{\emptyset, \star\}$, we have
    \[
        \phi_{\Shat}(\zeta_{\Qhat}^{BZ, \bullet}(\bm{k})) = \widehat{\xi}^{\bullet}(\bm{k}) \quad \text{and} \quad \phi_{\Shat}(\overline{\zeta}_{\Qhat}^{\bullet}(\bm{k})) = \overline{\widehat{\xi}^{\bullet}(\bm{k})},
    \]
    where the bar on the right-hand side means taking the complex conjugate of each coefficient over $\CC$.
    In particular, it holds that
    \[
        \phi_{\Shat}(\zeta_{\Qhat}^{BZ, \bullet}(\bm{k})) \equiv  \phi_{\Shat}(\overline{\zeta}_{\Qhat}^{\bullet}(\bm{k})) \equiv \zeta_{\Shat}^{\bullet}(\bm{k}) 
        \ \bmod \pi i \mathcal{Z}[\pi i][[t]].
    \]
\end{thm}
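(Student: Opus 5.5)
The plan is to compute $\widehat{ev}$ of $\zeta_{\Qhat}^{BZ}(\bm{k})$ explicitly, using the relation $[p]_{q_p(t)}=t$, and then let $p\to\infty$ coefficientwise in $t$. The star versions follow at the end by summing over $\bm{l}\preceq\bm{k}$, since both $\phi_{\Shat}$ and the definition of $\widehat{\xi}^{\star}$ are additive in this sense. So it suffices to treat the non-star case.

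\textbf{Step 1: reflection.} From $[p]=[m]+q^m[p-m]$ we get
\[
[p-m]_{q_p(t)}=q_p(t)^{-m}\bigl(t-[m]_{q_p(t)}\bigr).
\]
For each prime $p$, split the truncated sum $\zeta^{BZ}_{p-1}(\bm{k})$ according to the position $a$ such that $m_1<\dots<m_a<p/2<m_{a+1}<\dots<m_r$ ($p$ odd, so no $m_j$ equals $p/2$). In the last $r-a$ variables substitute $m_j=p-n_j$. This reverses their order, giving the index $\overline{\bm{k}^a}$. Each factor becomes
\[
\frac{q^{(k_j-1)(p-n_j)}\,q^{k_j n_j}}{(t-[n_j])^{k_j}},
\]
with $q^p=1-(1-q)t$. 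Expanding $(t-[n])^{-k}=(-1)^k\sum_{l}\binom{k+l-1}{l}t^l[n]^{-k-l}$ and collecting the resulting powers of $q$ and $1-q$ should produce the sign $(-1)^{k_{a+1}+\dots+k_r}$ and the coefficients $b\binom{\bm{k}^a}{\bm{l}}$, which carry the shifted binomials $\binom{k_j+l_j+1}{l_j}$. I expect this to require a small combinatorial identity absorbing the extra $q^{\pm n}$ factors; I would verify it first in depth one.

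\textbf{Step 2: the limit $p\to\infty$.} With $q=q_p(t)$, we have $q_p(0)=\zeta_p$, and the coefficient of $t^l$ in $q_p(t)$ is $O(p^{-(l+1)})$ by \cite{TT}*{Proposition A.2}. Hence for $m<p/2$ one has $[m]_{\zeta_p}=m\,(1+O(m/p))$ up to an explicit phase, and each $t^l$-coefficient of the half-range sums can be compared with the truncated sums $\sum_{m_1<\dots<m_a<p/2}\prod_j m_j^{-k_j}$. Where the half-sums converge, we get $\zeta(\bm{k}_a)$ and $\zeta(\overline{\bm{k}^a+\bm{l}})$ by dominated convergence. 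The divergent pieces come from entries equal to $1$. Writing the half-range sum as a polynomial in the depth-one sum $\sum_{m<p/2}1/[m]_{\zeta_p}$ (stuffle regularization), one computes
\[
\sum_{m<p/2}\frac{1}{[m]_{\zeta_p}}=\log p+\gamma+\text{const}\mp\frac{\pi i}{2}+o(1),
\]
where the $\mp\pi i/2$ comes from $\arg(1-\zeta_p^m)$. The real divergent part $\log p+\cdots$ appears in the $\bm{k}_a$ block and in the reflected block with opposite total sign. It therefore cancels in the alternating sum over $a$, exactly as in the $T$-independence of $\zeta_{\Shat}$. What survives is $T=-\pi i/2$ on the first block and $T=+\pi i/2$ on the reflected block, giving $\widehat{\xi}(\bm{k})$.

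\textbf{Step 3: the barred version and reduction mod $\pi i$.} For $\overline{\zeta}$, the numerators $q^{m}$ instead of $q^{(k-1)m}$ correspond, under $q\mapsto q^{-1}$ together with $[m]_{q^{-1}}=q^{1-m}[m]_q$, to the BZ numerators. Since $\overline{q_p(t)}$ is the branch attached to $\zeta_p^{-1}$, the same computation yields the complex conjugate $\overline{\widehat{\xi}(\bm{k})}$. Finally, replacing $T=\pm\pi i/2$ by any $T$ changes $\widehat{\xi}$ only by elements of $\pi i\mathcal{Z}[\pi i][[t]]$, because the regularized polynomials have $\QQ$-coefficients in $T$ and $\mathcal{Z}$. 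So modulo $\pi i$ both limits equal $\zeta_{\Shat}(\bm{k})$.

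The main obstacle is Step 2. One must control the error terms uniformly in $p$ for each fixed $t$-degree: the tails near $m\approx p/2$, and the $O(p^{-(l+1)})$ corrections of $q_p(t)$ multiplied by sums of size $p^{l}$. One must also pin down the precise imaginary constant $\mp\pi i/2$. I would handle this by first treating depth one completely and then reducing general depth via the stuffle-homomorphism property of $\zeta^{BZ}_{\Qhat}$ (for the $q$-stuffle $\ast_q$) together with harmonic regularization.
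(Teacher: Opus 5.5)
The paper does not prove this statement; it is quoted from Takeyama--Tasaka, so your outline has to stand on its own. The overall route is viable: split at $p/2$ (the constraint $m_a<p/2<m_{a+1}$ makes the two blocks independent), reflect the upper block, regularize each half-range sum, and let $p\to\infty$ coefficientwise. But Step 1, as you plan it, cannot work. Your expansion of $(t-[n])^{-k}$ produces $\binom{k_j+l_j-1}{l_j}$ with $l_j\ge 0$, and that is the correct coefficient. The printed $b\binom{\bm{k}}{\bm{l}}=\prod_j\binom{k_j+l_j+1}{l_j}$ and the range $\bm{l}\in(\ZZ_{>0})^{r-a}$ are misprints for the standard $\prod_j\binom{k_j+l_j-1}{l_j}$ and $(\ZZ_{\ge 0})^{r-a}$. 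No identity can convert one into the other. For $\bm{k}=(2)$, the $t^0$-coefficient of $\phi_{\Shat}(\zeta^{BZ}_{\Qhat}(2))$ is $\lim_p e^{2\pi i/p}\sin^2(\pi/p)\sum_{m=1}^{p-1}\csc^2(\pi m/p)=\lim_p e^{2\pi i/p}\sin^2(\pi/p)\tfrac{p^2-1}{3}=2\zeta(2)$, whereas the printed formula gives $\zeta(2)$. So your proposed depth-one check would refute the literal statement; what you must prove is the corrected one.

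More importantly, the factors $q^{n_j}$ must not be absorbed anywhere. After reflection each factor is $(-1)^{k_j}q^{(k_j-1)p}\,q^{n_j}\sum_{l\ge0}\binom{k_j+l-1}{l}t^l[n_j]^{-k_j-l}$, and the numerator $q^{n_j}$ does not depend on $k_j$. So the reflected block is a half-range sum of $\overline{\zeta}$-type, while the block $m_j<p/2$ is of BZ-type. This is exactly what produces the two parameters. With $\zeta_p=e^{2\pi i/p}$, the identities $\frac{1}{1-e^{i\theta}}=\frac12+\frac{i}{2}\cot\frac{\theta}{2}$ and $\frac{e^{i\theta}}{1-e^{i\theta}}=-\frac12+\frac{i}{2}\cot\frac{\theta}{2}$ give
$\sum_{m<p/2}1/[m]_{\zeta_p}=L_p-\frac{\pi i}{2}+o(1)$ and $\sum_{n<p/2}\zeta_p^{n}/[n]_{\zeta_p}=L_p+\frac{\pi i}{2}+o(1)$,
with the same real $L_p=\sin(\pi/p)\sum_{n<p/2}\cot(\pi n/p)$. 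The opposite sign on the reflected block comes from the numerator $\zeta_p^{n}$, not from $\arg(1-\zeta_p^m)$ alone. Had the $q^{n_j}$ been absorbed, both blocks would carry $-\frac{\pi i}{2}$ and you would obtain the real, $T$-independent $\zeta_{\Shat}(\bm{k})$. That already fails for $\bm{k}=(1)$, where $\sum_{m=1}^{p-1}1/[m]_{\zeta_p}=(1-\zeta_p)\frac{p-1}{2}\to-\pi i$.

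Because the blocks carry different parameters $T_1=L_p-\frac{\pi i}{2}$ and $T_2=L_p+\frac{\pi i}{2}$, removing $L_p$ requires that the alternating sum, as a polynomial in $(T_1,T_2)$, depend only on $T_1-T_2$. This is stronger than the $T$-independence you invoke. It is true, but it must be stated and proved: a shift $T\mapsto T+u$ is convolution with a character, and the map $w\mapsto(-1)^{\mathrm{wt}(w)}\sum_{\bm l}b\binom{w}{\bm l}t^{\mathrm{wt}(\bm l)}\overline{w+\bm l}$ (a stuffle homomorphism and anti-coalgebra map for deconcatenation) turns that character into its inverse. By contrast, the uniform estimates you call the main obstacle are routine. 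Writing $q_p(t)=\zeta_pe^{\eta(t)}$, for $m<p/2$ every $t^l$-coefficient ($l\ge1$) of $[m]_{q_p(t)}/[m]_{\zeta_p}$ and of $q_p(t)^m/\zeta_p^m$ is $O(p^{-l})$ uniformly in $m$.

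Two smaller repairs are needed. First, $\zeta^{BZ,\star}_{\Qhat}(\bm{k})$ is not $\sum_{\bm{l}\preceq\bm{k}}\zeta^{BZ}_{\Qhat}(\bm{l})$; it differs from it by $(1-q)$-multiples of BZ values, which $\phi_{\Shat}$ kills only once the non-star case is known. Second, $q_p(t)^{-1}\ne\overline{q_p(t)}$. Rather, $q_p(t)^{-1}=\overline{q_p}(s_p(t))$ (conjugated coefficients) with $s_p(t)=q_p(t)^{1-p}t\to t$ coefficientwise. So Step 3 needs this reparametrization, or more simply a rerun of the split for $\overline{\zeta}$, where the two numerator types trade places.
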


The models of $\Qhat$-MZVs considered in \cite{TT} were limited to $\zeta_{\Qhat}^{BZ}$ and $\overline{\zeta}_{\Qhat}$, which appeared in \cref{prop:tt}. 
We show that this evaluation can also be applied to $\zeta_{\Qhat}^{SZ}$.

\begin{thm}
    For any index $\bm{k}$ and $\bullet \in \{\emptyset, \star\}$, we have
    \[
        \phi_{\Shat}(\zeta_{\Qhat}^{SZ, \bullet}(\bm{k})) = \overline{\widehat{\xi}^{\bullet}(\bm{k})},
    \]
    and
    \[
        \phi_{\Shat}(\zeta_{\Qhat}^{SZ, \bullet}(\bm{k})) \equiv \zeta_{\Shat}^{\bullet}(\bm{k}) 
        \ \bmod \pi i \mathcal{Z}[\pi i][[t]].
    \]
\end{thm}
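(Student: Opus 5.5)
The plan is to reduce the statement to the already established evaluation $\phi_{\Shat}(\overline{\zeta}_{\Qhat}^{\bullet}(\bm{k})) = \overline{\widehat{\xi}^{\bullet}(\bm{k})}$ of \cite{TT}*{Theorem 4.4}. The second assertion then follows at once: $\overline{\widehat{\xi}^{\bullet}(\bm{k})} \equiv \widehat{\xi}^{\bullet}(\bm{k}) \equiv \zeta_{\Shat}^{\bullet}(\bm{k}) \bmod \pi i\mathcal{Z}[\pi i][[t]]$, because complex conjugation only changes the sign of odd powers of $\pi i$. It also suffices to treat $\bullet = \emptyset$. Indeed, $\zeta_{p-1}^{SZ,\star}(\bm{k}) = \sum_{\bm{l}\preceq\bm{k}}\zeta_{p-1}^{SZ}(\bm{l})$ holds termwise, since the numerators $q^{k_jm_j}$ multiply correctly when $m_j = m_{j+1}$. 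Moreover $\phi_{\Shat}$ is additive, and $\widehat{\xi}^{\star}$ is defined by the same $\preceq$-sum.

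To pass from $SZ$ to the bar model, I would use the substitution $q\mapsto q^{-1}$. Since $[m]_{q^{-1}} = q^{1-m}[m]_q$, one has termwise
\[
\frac{q^{k m}}{[m]_q^{k}} = \frac{q^{k}}{[m]_{q^{-1}}^{k}}, \qquad \frac{q^{m}}{[m]_q^{k}} = \frac{q^{(k-1)(m-1)}\,q}{[m]_{q^{-1}}^{k}}\cdot q^{\,?}\!,
\]
and the aim is an exact identity of truncated sums. The cleanest form I expect is $\zeta^{SZ}_{N}(\bm{k})(q) = q^{\mathrm{wt}(\bm{k})}\,\zeta^{BZ}_N(\bm{k})\big|_{q\to q^{-1}}$ times an explicit monomial correction. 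I would fix the precise monomial by checking the depth-one case and then extending multiplicatively in each $m_j$. Combined with the analogous $q\mapsto q^{-1}$ relation between $\zeta^{BZ}$ and $\overline{\zeta}$, this should express $\zeta^{SZ}_{p-1}(\bm{k})$ as $\overline{\zeta}_{p-1}(\bm{k})$, or as $\zeta^{BZ}_{p-1}(\bm{k})$ evaluated at $q^{-1}$, up to a factor $q^{c}$ with $c$ independent of $p$. An overall factor $q^c$ is harmless under $\phi_{\Shat}$ because $q_p(t)\to 1$ coefficientwise as $p\to\infty$.

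The main obstacle is the analytic side of this substitution, namely relating evaluation at $q_p(t)^{-1}$ to evaluation at $\overline{q_p(t)}$. The conjugate series $\overline{q_p}(t)$ satisfies $[p]_{\overline{q_p}(t)} = t$ and $\overline{q_p}(0) = \zeta_p^{-1}$, whereas $q_p(t)^{-1}$ satisfies $[p]_{q_p(t)^{-1}} = q_p(t)^{1-p}t$. Using $q_p(t)^p = 1-(1-q_p(t))t$, this gives $q_p(t)^{-1} = \overline{q_p}(\tau_p(t))$ with
\[
\tau_p(t) = \frac{q_p(t)\,t}{1-(1-q_p(t))t}.
\]
I would then show, using the asymptotics of \cite{TT}*{Proposition A.2} (the coefficient of $t^l$ in $q_p(t)-\zeta_p$ is $O(p^{-(l+1)})$), that $\tau_p(t) = t + o(1)$ coefficientwise as $p\to\infty$. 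Since the expansions of $\zeta_{p-1}^{BZ}(\bm{k})(\overline{q_p}(t))$ have coefficients converging as $p\to\infty$, this should justify interchanging the limit with the reparametrization. The delicate point is that the $p$-dependence of those coefficients must be controlled uniformly enough for the $o(1)$ error in $\tau_p$ to disappear in the limit. Granting this, $\phi_{\Shat}(\zeta_{\Qhat}^{SZ}(\bm{k}))$ equals the conjugate of $\phi_{\Shat}(\zeta_{\Qhat}^{BZ}(\bm{k})) = \widehat{\xi}(\bm{k})$, which is the claim.
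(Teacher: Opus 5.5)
Your reductions to $\bullet=\emptyset$ and from the first assertion to the second are fine. The central step fails, however. Once some $k_j\ge 2$, there is no identity $\zeta^{SZ}_{N}(\bm{k})=q^{c}\,\overline{\zeta}_{N}(\bm{k})$ with $c$ independent of $N$. Equivalently, there is no identity $\zeta^{SZ}_{N}(\bm{k})=q^{c'}\,\zeta^{BZ}_N(\bm{k})|_{q\to q^{-1}}$.

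From $[m]_{q^{-1}}=q^{1-m}[m]_q$ you get exactly $\zeta^{BZ}_N(\bm{k})|_{q\to q^{-1}}=q^{-\mathrm{wt}(\bm{k})}\,\overline{\zeta}_N(\bm{k})$. By contrast, $\zeta^{SZ}_N(\bm{k})=q^{\mathrm{wt}(\bm{k})}\sum_{0<m_1<\cdots<m_r\le N}\prod_j [m_j]_{q^{-1}}^{-k_j}$ is the numerator-free model at $q^{-1}$. Termwise, the $SZ$ and bar summands differ by $\prod_j q^{(k_j-1)m_j}$, which depends on the summation variables. Already $\zeta^{SZ}_2(2)=q^2+q^4/(1+q)^2$ and $\overline{\zeta}_2(2)=q+q^2/(1+q)^2$ have a non-monomial ratio.

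Such index-dependent powers of $q$ are not harmless under $\phi_{\Shat}$ in general. For example, $\zeta^{BZ}_{p-1}(1)-\overline{\zeta}_{p-1}(1)=(p-1)(1-q)$, and the image of this under $\widehat{ev}$ has constant term $(p-1)(1-\zeta_p)\to -2\pi i$. So your analysis of $q_p(t)^{-1}=\overline{q_p}(\tau_p(t))$ is correct but only relates $\zeta^{BZ}_{\Qhat}$ to $\overline{\zeta}_{\Qhat}$, which is already covered by Takeyama--Tasaka, and never reaches $\zeta^{SZ}_{\Qhat}$. (That analysis is also not actually delicate: since $\tau_p(0)=0$, each coefficient of $G_p(\tau_p(t))$ involves only finitely many coefficients of $G_p$.)

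The missing idea is to write $q^{km}=q^{m}\{1-(1-q)[m]\}^{k-1}$ and expand. Then $\zeta^{SZ}_{\Qhat}(\bm{k})-\overline{\zeta}_{\Qhat}(\bm{k})$ becomes a $\QQ$-linear combination of terms $(1-q)^{j}\,\overline{\zeta}_{\Qhat}(\bm{l})$ with $j\ge 1$. Every entry of $\bm{l}$ has the form $k_i-j_i\ge 1$, so $\bm{l}$ is a genuine index. This is exactly what goes wrong in the weight-one example above, where an unbounded sum $\sum_{m<p}1$ appears.

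To finish, use three facts: $\phi_{\Shat}$ is an algebra homomorphism on $\widehat{\mathcal{O}}$; each $\overline{\zeta}_{\Qhat}(\bm{l})$ lies in $\widehat{\mathcal{O}}$ by Theorem 4.4 of Takeyama--Tasaka; and $\phi_{\Shat}(1-q)=0$. Hence all correction terms vanish, and $\phi_{\Shat}(\zeta^{SZ}_{\Qhat}(\bm{k}))=\phi_{\Shat}(\overline{\zeta}_{\Qhat}(\bm{k}))=\overline{\widehat{\xi}(\bm{k})}$. No substitution $q\mapsto q^{-1}$ or reparametrization of $t$ is needed.
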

\begin{proof}
By using the identity
\[
    q^{km} = q^{m}\{1-(1-q)[m]\}^{k-1} = q^m + q^m\sum_{j=1}^{k-1}\binom{k-1}{j}\{-(1-q)[m]\}^{j},
\]
$\zeta_{\Qhat}^{SZ}(\bm{k})$ can be expressed as a $\QQ[1-q]$-linear combination of $\overline{\zeta}_{\Qhat}(\bm{l})$.
More precisely, if an index $\bm{k}$ has an entry greater than $1$, we have
    \begin{align}
         \zeta_{\Qhat}^{SZ}(\bm{k}) - \overline{\zeta}_{\Qhat}(\bm{k}) \in \Span_{\QQ}\{(1-q)^{j}\overline{\zeta}_{\Qhat}(\bm{l}) \mid 0 < j < \mathrm{wt}(\bm{k}), \mathrm{wt}(\bm{l}) + j = \mathrm{wt}(\bm{k})\}.
    \end{align}
For example, we have
    \begin{align}
        \zeta_{\Qhat}^{SZ}(2, 1, 3) = \overline{\zeta}_{\Qhat}(2, 1, 3) &-(1-q)\{2\overline{\zeta}_{\Qhat}(2, 1, 2) + \overline{\zeta}_{\Qhat}(1, 1, 3)\} \\
        &+ (1-q)^2\{\overline{\zeta}_{\Qhat}(2, 1, 1) + 2\overline{\zeta}_{\Qhat}(1, 1, 2)\} - (1-q)^3\overline{\zeta}_{\Qhat}(1, 1, 1).
    \end{align}
Note that if $\bm{k} = (\{1\}^r)$ for some $r$, then $\zeta_{\Qhat}^{SZ}(\bm{k}) = \overline{\zeta}_{\Qhat}(\bm{k})$ by definition.

    Since $\phi_{\Shat}(1-q) = 0$, we obtain
    \[
    \phi_{\Shat}(\zeta_{\Qhat}^{SZ, \bullet}(\bm{k})) = \phi_{\Shat}(\overline{\zeta}_{\Qhat}^{\bullet}(\bm{k})) \equiv \zeta_{\Shat}^{\bullet}(\bm{k}) 
        \ \bmod \pi i \mathcal{Z}[\pi i][[t]].
    \]
    by the above discussion.
\end{proof}

From the above theorems, once we obtain a $\QQ[q]$-linear relation for $\Qhat$-MZVs, $\QQ$-linear relations for $\Ahat$-MZV and $\Shat$-MZV follow simultaneously via the maps $\phi_{\Ahat}$ and $\phi_{\Shat}$, respectively.

\section{duality formulas}
In this section, we introduce two duality formulas for $\Qhat$-MZVs.
\subsection{The $q$-analogue of Rosen's duality formula}
The following is the $q$-analogue of \cref{prop:rosen}.
\begin{thm}\label{thm:qrosen}
     For an index $\bm{k}$, it holds that
    \begin{align}
        q^{\frac{\bm{p}(\bm{p} - 1)}{2}}\zeta_{\Qhat}^{BZ}(\bm{k}) + \sum_{l \geq 0} \zeta_{\Qhat}^{BZ}(\bm{k} \ast_q \{1\}^l, 1)[\bm{p}]^{l+1} = (-1)^{\mathrm{dep}(\bm{k})} \sum_{\bm{k} \preceq \bm{l}} \zeta_{\Qhat}^{SZ}(\bm{l}),
    \end{align}
where
    \[q^{\frac{\bm{p}(\bm{p} - 1)}{2}} := \left(\left( \sum_{l = 0}^{n - 1}\binom{\frac{p-1}{2}}{l}(-(1-q)[p])^l \ \bmod [p]^n\right)_p\right)_n.\]
\end{thm}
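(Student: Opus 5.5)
The plan is to follow the strategy of Rosen's proof of \cref{prop:rosen}: work with the finite sums for each fixed prime $p$ and use the reflection $m \mapsto p-m$. The key identity is
\[
    [p-m] = \frac{1-q^{p-m}}{1-q} = q^{-m}\,\frac{q^m - q^p}{1-q} = q^{-m}\bigl([p]-[m]\bigr),
\]
so that
\[
    \frac{1}{[p-m]} = -\frac{q^m}{[m]}\cdot\frac{1}{1-[p]/[m]} \equiv -\frac{q^m}{[m]}\sum_{j=0}^{n-1}\frac{[p]^j}{[m]^j} \bmod [p]^n .
\]
Substituting $m_j \mapsto p - m_{r+1-j}$ in $\zeta^{BZ}_{p-1}(\bm{k})$ produces numerators $q^{(k_j-1)(p-m_j)} = q^{(k_j-1)p}q^{-(k_j-1)m_j}$ together with factors $(-1)^{k_j}q^{k_jm_j}/[m_j]^{k_j}$. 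This gives, modulo $[p]^n$,
\[
    \zeta_{p-1}^{BZ}(\bm{k}) \equiv (-1)^{\mathrm{wt}(\bm{k})} q^{p(\mathrm{wt}(\bm{k})-r)} \sum_{0<m_1<\cdots<m_r<p}\prod_j \frac{q^{m_j}}{[m_j]^{k_j}}\Bigl(1-\frac{[p]}{[m_j]}\Bigr)^{-k_j},
\]
with the index reversed. Since $q^p = 1-(1-q)[p]$, the factor $q^{p(\cdot)}$ is a polynomial in $[p]$.

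This alone is a reversal-type formula rather than the Hoffman-type duality, so I would instead mimic Rosen's route through Hoffman duality. I would write $\zeta^{SZ}$ on the right as a sum over $\bm{k}\preceq\bm{l}$ and recognise $\sum_{\bm{k}\preceq\bm{l}}\zeta^{SZ}_{p-1}(\bm{l})$ as a star-type sum with the complementary index. Concretely, the plan is to prove a finite identity for each $N = p-1$ using the generating-function form of Hoffman's duality, namely the partial-fraction/binomial identity
\[
    \sum_{j=1}^{N}\binom{N}{j}(-1)^{j-1}\frac{q^{\binom{j}{2}}\cdots}{[j]^k},
\]
extended to $q$-binomials $\binom{N}{j}_q$. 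The $q$-binomial $\binom{p-1}{m}_q$ satisfies
\[
    \binom{p-1}{m}_q = \prod_{i=1}^m \frac{[p-i]}{[i]} = (-1)^m q^{-m(m+1)/2}\prod_{i=1}^m\Bigl(1-\frac{[p]}{[i]}\Bigr) \pmod{[p]^n}.
\]
Expanding the product $\prod_{i\le m}(1-[p]/[i])$ gives $\sum_l (-[p])^l \zeta^{\star\text{-type}}_m(\{1\}^l)$. This is where the terms $\zeta^{BZ}(\bm{k}\ast_q\{1\}^l,1)[\bm{p}]^{l+1}$ come from: the product of an $l$-fold sum in $1/[i]$ with $\zeta^{BZ}_{m}(\bm{k})$ is converted via the $q$-stuffle $\ast_q$. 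That product rule is the harmonic product for $\zeta^{BZ}$ stated earlier, and the extra $(1-q)z_{k+l-1}$ term arises exactly because $q^{(a-1)m}q^{(b-1)m}/[m]^{a+b} = q^{(a+b-1)m}/[m]^{a+b} + (1-q)q^{(a+b-2)m}/[m]^{a+b-1}$.

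The steps in order are as follows. (i) Establish the finite $q$-identity, namely a $q$-analogue of Hoffman's identity
\[
    \sum_{\bm{k}\preceq \bm{l}}\zeta^{SZ}_N(\bm{l}) = (-1)^{r}\sum_{0<m\le N}(-1)^{m}\binom{N}{m}_q q^{\binom{m+1}{2}-?}\,(\cdots)\zeta^{BZ}\text{-type sums},
\]
proved by induction on depth using the recursion $\binom{N}{m}_q = \binom{N-1}{m}_q + q^{N-m}\binom{N-1}{m-1}_q$, or equivalently by a partial-fraction expansion in the variable $q^{m}$. (ii) Specialise to $N = p-1$ and insert the $q$-binomial expansion above. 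The power $q^{-m(m+1)/2}$ combines with the numerators to turn $q^{km}$-type numerators into $q^{(k-1)m}$-type ones, which converts $SZ$ into $BZ$. The leftover global factor is $q^{p(p-1)/2}$, written as $(1-(1-q)[p])^{(p-1)/2}$ and expanded binomially modulo $[p]^n$; this matches the definition of $q^{\frac{\bm{p}(\bm{p}-1)}{2}}$. (iii) Collect the terms by powers of $[p]$, using $\ast_q$ to express the products, and observe that the top term $m = p-1$ produces the trailing entry $1$ in $(\bm{k}\ast_q\{1\}^l,1)$. (iv) Pass to $\Qhat$; only finitely many $l<n$ contribute modulo $[p]^n$, and compatibility in $n$ is automatic.

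I expect the main obstacle to be step (i): finding the exact finite $q$-identity with the correct powers of $q$, so that $SZ$ appears on one side and $BZ$ with $\ast_q$ on the other. I would first guess it by checking depth one, where $\bm{k}=(k)$, $\sum_{(k)\preceq\bm{l}}$ runs over compositions refining $k$, and the identity should reduce to a known $q$-analogue of $\sum_{m}(-1)^{m-1}\binom{N}{m}m^{-k} = \zeta^\star_N(\{1\}^k)$, namely Dilcher's identity. I would then generalise by inducting on depth via the $q$-binomial recursion.
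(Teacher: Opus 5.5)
\textbf{There is a genuine gap in steps (ii)--(iii).}

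Your coarse outline, a $q$-Hoffman identity at $N=p-1$ followed by a $[p]$-adic expansion of $q$-binomials, is the paper's. The identity you are guessing at in step (i) is exactly the cited \cref{prop:HHT}: $H_N(\bm{k})=(-1)^{\mathrm{dep}(\bm{k})}\sum_{\bm{k}\preceq\bm{l}}\zeta^{SZ}_N(\bm{l})$. Here $H_N(\bm{k})$ carries the $BZ$ numerators $q^{(k_j-1)m_j}$ and the weight $(-1)^{m_r}q^{\binom{m_r+1}{2}}\binom{N}{m_r}_q$. So the switch from $SZ$ to $BZ$ happens entirely inside (i), and you can cite it rather than re-derive it.

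Steps (ii)--(iii), however, do not work as described. By your own formula, $(-1)^{m}q^{\binom{m+1}{2}}\binom{p-1}{m}_q$ equals $P(m)=\prod_{i=1}^{m}(1-[p]/[i])$ exactly. The $q$-power cancels, so no numerators are converted; a single factor quadratic in $m_r$ could not shift every $q^{k_jm_j}$ anyway. No global $q^{p(p-1)/2}$ is left over either. What you actually get is $\sum_{0<m_1<\cdots<m_r<p}\prod_{j}q^{(k_j-1)m_j}[m_j]^{-k_j}\,P(m_r)$. Its $[p]$-expansion involves the elementary symmetric sums $e_l(1/[1],\dots,1/[m_r])$, which are non-star sums (not star-type, as you wrote) and include $m_r$ itself. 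This expression has neither a trailing entry $1$ nor a $q^{\binom{p}{2}}$ term, so collecting powers of $[p]$ does not by itself produce the left-hand side.

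The missing idea is a summation by parts that moves the binomial weight onto a new outermost variable. The paper does this at general $N$: \cref{prop:sumqbinom} yields $H_N(\bm{k})=\sum_{i=1}^{N+1}(-1)^{i+1}q^{\binom{i}{2}}\binom{N+1}{i}_q\,\zeta^{BZ}_{i-1}(\bm{k})$ (\cref{prop:qhoffman}). At $N=p-1$:
\begin{itemize}
\item the term $i=p$ is $q^{\binom{p}{2}}\zeta^{BZ}_{p-1}(\bm{k})$ for odd $p$, which is where $q^{\bm{p}(\bm{p}-1)/2}$ comes from;
\item for $i<p$ one has $q^{\binom{i}{2}}\binom{p}{i}_q=(-1)^{i-1}\frac{[p]}{[i]}P(i-1)$, and this $1/[i]$ is the trailing $1$.
\end{itemize}
In your notation this is plain Abel summation, using $P(i-1)-P(i)=\frac{[p]}{[i]}P(i-1)$ and $P(p-1)=q^{\binom{p}{2}}$ (odd $p$). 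Your step (iii) has the roles reversed: the top term gives $q^{\binom{p}{2}}\zeta^{BZ}_{p-1}(\bm{k})$, and the trailing $1$ comes from the differences.

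\textbf{On the sign.} Your factor $(-[p])^l$ is correct and survives. Done carefully, the argument gives $\sum_{l\ge0}(-1)^l\zeta^{BZ}_{\Qhat}(\bm{k}\ast_q\{1\}^l,1)[\bm{p}]^{l+1}$, not the unsigned sum in the statement. The paper's \cref{prop:qrosen} has this $(-1)^l$ in its middle line and then drops it, and the sign is really needed. Take $q=1$, $p=5$, $\bm{k}=(1)$ and $h_n=\sum_{m=1}^{n}1/m$. Then $\sum_{i=1}^{5}(-1)^{i+1}\binom{5}{i}h_{i-1}=-\frac{25}{12}=h_4+\sum_{i=1}^{4}\frac{5}{i}\prod_{j<i}\bigl(1-\frac{5}{j}\bigr)h_{i-1}$, whereas the unsigned version (with $1+\frac{5}{j}$) is positive. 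The discrepancy also persists in $\Qhat$: for $\bm{k}=(1)$, the $t^2$-coefficient of $\phi_{\Shat}$ of the difference (unsigned minus signed) has real part $-6\zeta(3)$. So the identity to aim for is the signed one.
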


Our proof of \cref{thm:qrosen} is based on a $q$-analogue of the proof for \cref{prop:rosen} in \cite{Ros1}*{Theorem 4.5}.
We need some auxiliary results.

\begin{lem}\label{prop:sumqbinom}
    For integers $m, N$ with $0 < m \leq N$, we have
    \[\sum_{i = m+1}^{N + 1} (-1)^{i + 1}q^{\binom{i}{2}}\binom{N+1}{i}_q = (-1)^mq^{\binom{m + 1}{2}}\binom{N}{m}_q.\]
\end{lem}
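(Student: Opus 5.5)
The identity is a finite polynomial identity in $q$, so I will argue by induction on $N$ with $m$ fixed (or by downward induction on $m$ with $N$ fixed), using only the $q$-Pascal rule. Set
\[
S(N,m) \coloneq \sum_{i = m+1}^{N + 1} (-1)^{i + 1}q^{\binom{i}{2}}\binom{N+1}{i}_q .
\]
The claim is $S(N,m)=(-1)^mq^{\binom{m+1}{2}}\binom{N}{m}_q$.

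The cleanest route uses the $q$-binomial theorem, $\sum_{i=0}^{N+1}(-1)^iq^{\binom{i}{2}}\binom{N+1}{i}_q=0$ for $N+1\ge 1$. Since the full alternating sum vanishes, we have
\[
S(N,m)=\sum_{i=0}^{m}(-1)^{i}q^{\binom{i}{2}}\binom{N+1}{i}_q .
\]
It therefore suffices to prove the partial-sum identity
\[
\sum_{i=0}^{m}(-1)^{i}q^{\binom{i}{2}}\binom{N+1}{i}_q=(-1)^mq^{\binom{m+1}{2}}\binom{N}{m}_q,
\]
which I will do by induction on $m$.

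For $m=0$ both sides equal $1$. For the inductive step, I need
\[
(-1)^mq^{\binom{m+1}{2}}\binom{N}{m}_q+(-1)^{m+1}q^{\binom{m+1}{2}}\binom{N+1}{m+1}_q=(-1)^{m+1}q^{\binom{m+2}{2}}\binom{N}{m+1}_q .
\]
Dividing by $(-1)^{m+1}q^{\binom{m+1}{2}}$, this becomes
\[
\binom{N+1}{m+1}_q-\binom{N}{m}_q=q^{m+1}\binom{N}{m+1}_q,
\]
which is exactly the $q$-Pascal rule $\binom{N+1}{m+1}_q=\binom{N}{m}_q+q^{m+1}\binom{N}{m+1}_q$. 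Note that $\binom{m+2}{2}-\binom{m+1}{2}=m+1$, so the exponents match.

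The only real care needed is in choosing the correct one of the two $q$-Pascal forms so that the power $q^{m+1}$ appears. I will also check that the case $m=N$ is consistent: there the left side of the original claim is $(-1)^{N}q^{\binom{N+1}{2}}$, matching the right side.
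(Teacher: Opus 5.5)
Your proof is correct and follows the paper's approach: both use the $q$-binomial theorem to rewrite the sum as the partial sum $\sum_{i=0}^{m}(-1)^{i}q^{\binom{i}{2}}\binom{N+1}{i}_q$, and then apply the $q$-Pascal rule $\binom{N+1}{i}_q=q^{i}\binom{N}{i}_q+\binom{N}{i-1}_q$. The paper writes the second step as a telescoping sum, and your induction on $m$ is that same telescoping carried out one term at a time.
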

\begin{proof}
    Using the $q$-binomial theorem, it holds that
     \[0 = \sum_{i = 0}^{N + 1}(-1)^{i}q^{\binom{i}{2}}\binom{N+1}{i}_q.\]
    Therefore,
    \begin{align}
       \sum_{i = m+1}^{N + 1} (-1)^{i + 1}q^{\binom{i}{2}}\binom{N+1}{i}_q &= \sum_{i = 0}^{m}(-1)^{i}q^{\binom{i}{2}}\binom{N+1}{i}_q \\
       &= \sum_{i = 0}^{m} \left\{(-1)^{i}q^{\binom{i+1}{2}}\binom{N}{i}_q - (-1)^{i-1}q^{\binom{i}{2}}\binom{N}{i-1}_q \right\} \\
       &= (-1)^{m}q^{\binom{m+1}{2}}\binom{N}{m}_q, 
    \end{align}
    where for the second equality, we have used the identity
    \[\binom{N + 1}{i}_q = q^{i}\binom{N}{i}_q + \binom{N}{i-1}_q.\]
    This completes the proof.
\end{proof}

\begin{prop}\label{prop:qhoffman}
    For an integer $N > 0$ and an index $\bm{k}$, we have
    \[
        H_N(\bm{k}) = \sum_{i = 1}^{N+1}(-1)^{i+1}q^{\binom{i}{2}}\binom{N+1}{i}_q\zeta_{i-1}^{BZ}(\bm{k}), \]
    where 
    \[
        H_N({\bm{k}}) \coloneq \sum_{0 < m_1 < \cdots < m_r \leq N}\frac{q^{(k_1-1)m_1 + \cdots + (k_r-1)m_r}}{[m_1]^{k_1}\cdots [m_r]^{k_r}} (-1)^{m_r}q^{\binom{m_r + 1}{2}} \binom{N}{m_r}_q.
    \]
\end{prop}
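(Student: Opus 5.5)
The plan is to expand the right-hand side by the definition of $\zeta_{i-1}^{BZ}(\bm{k})$, interchange the order of summation, and then apply \cref{prop:sumqbinom} to the inner sum.

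Write $\bm{k}=(k_1,\dots,k_r)$ and abbreviate the summand weight by
\[
w(m_1,\dots,m_r)=\prod_{j=1}^r\frac{q^{(k_j-1)m_j}}{[m_j]^{k_j}}.
\]
By definition,
\[
\zeta_{i-1}^{BZ}(\bm{k})=\sum_{0<m_1<\cdots<m_r\le i-1} w(m_1,\dots,m_r).
\]
Substituting this into the right-hand side gives a double sum over $1\le i\le N+1$ and over chains $0<m_1<\cdots<m_r\le i-1$.

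Next, exchange the order of summation. A chain $0<m_1<\cdots<m_r$ contributes exactly for those $i$ with $m_r+1\le i\le N+1$. The chain therefore ranges over $0<m_1<\cdots<m_r\le N$, because $i\le N+1$ forces $m_r\le N$. The right-hand side becomes
\[
\sum_{0<m_1<\cdots<m_r\le N} w(m_1,\dots,m_r)\sum_{i=m_r+1}^{N+1}(-1)^{i+1}q^{\binom{i}{2}}\binom{N+1}{i}_q .
\]

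Now apply \cref{prop:sumqbinom} with $m=m_r$. Its hypothesis $0<m\le N$ holds since $1\le m_r\le N$. The inner sum then equals $(-1)^{m_r}q^{\binom{m_r+1}{2}}\binom{N}{m_r}_q$, and what remains is exactly $H_N(\bm{k})$.

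Two edge cases need a brief check. Terms with $i=1$ give $\zeta_0^{BZ}(\bm{k})=0$ for nonempty $\bm{k}$, so they are consistent with the chain description. If one allows $\bm{k}=\varnothing$, both sides are handled separately: $\zeta^{BZ}=1$ and the $q$-binomial identity $\sum_{i=0}^{N+1}(-1)^iq^{\binom i2}\binom{N+1}{i}_q=0$ applies, but the statement presumably intends $\bm{k}$ nonempty.

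The only real content is the interchange of summation. The step requiring care is checking that the summation range of $i$ matches the hypotheses of \cref{prop:sumqbinom}; this is routine, so I expect no genuine obstacle.
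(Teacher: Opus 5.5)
Your proposal is correct and follows the paper's proof essentially verbatim: substitute the definition of $\zeta_{i-1}^{BZ}(\bm{k})$, swap the order of summation so the inner sum runs over $m_r+1\le i\le N+1$, and evaluate that inner sum with \cref{prop:sumqbinom} at $m=m_r$. Your remarks on the $i=1$ term and on the range $1\le m_r\le N$ are accurate and harmless additions.
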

\begin{proof}
    Calculating the right-hand side by changing the order of summation and using \\ 
    \cref{prop:sumqbinom}, we obtain
    \begin{align}
      \sum_{i = 1}^{N+1}(-1)^{i+1}q^{\binom{i}{2}} & \binom{N+1}{i}_q \zeta_{i-1}^{BZ}(\bm{k}) \\
      &= \sum_{0 < m_1 < \cdots < m_r < i \leq N + 1} (-1)^{i+1}q^{\binom{i}{2}}\binom{N+1}{i}_q \prod_{j = 1}^{r} \frac{q^{(k_j-1)m_j}}{[m_j]^{k_j}} \\
      &= \sum_{0 < m_1 < \cdots < m_r \leq N} \prod_{j = 1}^{r} \frac{q^{(k_j-1)m_j}}{[m_j]^{k_j}} \sum_{i = m_r + 1}^{N+1}(-1)^{i+1}q^{\binom{i}{2}}\binom{N+1}{i}_q \\
      &= \sum_{0 < m_1 < \cdots < m_r \leq N}\prod_{j = 1}^{r} \frac{q^{(k_j-1)m_j}}{[m_j]^{k_j}}(-1)^{m_r}q^{\binom{m_r + 1}{2}} \binom{N}{m_r}_q \\
      &= H_N(\bm{k}).
    \end{align}
    This is the desired result.
\end{proof}
\begin{prop}\label{prop:qrosen}
    For any prime $p$ and index $\bm{k}$, we have
    \[\sum_{i=1}^{p} (-1)^{i+1}q^{\binom{i}{2}}\binom{p}{i}_q\zeta_{i-1}^{BZ}(\bm{k}) = (-1)^{p+1}q^{\binom{p}{2}}\zeta_{p-1}^{BZ}(\bm{k}) + \sum_{l \geq 0} \zeta_{p-1}^{BZ}(\bm{k} \ast_q \{1\}^l, 1)[p]^{l+1}.\]
\end{prop}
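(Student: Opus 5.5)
The plan is to rewrite the left-hand side with \cref{prop:qhoffman} and then expand the $q$-binomial weight around $[p]$. Since $\zeta_{p-1}^{BZ}$ vanishes on indices of depth $\geq p$, the sum over $l$ on the right is finite. The identity can therefore be treated as an exact identity of rational functions in $q$, and no reduction modulo $[p]^n$ is needed.

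Applying \cref{prop:qhoffman} with $N = p-1$, the left-hand side equals
\[
H_{p-1}(\bm{k}) = \sum_{0<m_1<\cdots<m_r\leq p-1}\prod_{j=1}^r\frac{q^{(k_j-1)m_j}}{[m_j]^{k_j}}\,F(m_r),
\qquad F(m) := (-1)^m q^{\binom{m+1}{2}}\binom{p-1}{m}_q .
\]
The key elementary observation is that $[p] = [j] + q^j[p-j]$. Hence $-q^j[p-j]/[j] = 1-[p]/[j]$, and therefore
\[
F(m)=\prod_{j=1}^{m}\Bigl(1-\frac{[p]}{[j]}\Bigr).
\]
In particular, $F(p-1) = (-1)^{p-1}q^{\binom{p}{2}}$. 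Substituting $F(m_r) = F(p-1) + \bigl(F(m_r)-F(p-1)\bigr)$, the first piece produces exactly the term $(-1)^{p+1}q^{\binom p2}\zeta^{BZ}_{p-1}(\bm{k})$. For the second piece I telescope using $F(n)-F(n-1) = -\frac{[p]}{[n]}F(n-1)$, which gives
\[
F(m_r)-F(p-1)=\sum_{m_r<n\leq p-1}\frac{[p]}{[n]}F(n-1).
\]
Inserting this makes a new outermost variable $n$ appear with weight $1/[n] = q^{0\cdot n}/[n]$. This is exactly the $BZ$ weight for an entry $1$, so it accounts for the final entry $1$ and the factor $[p]$.

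Next I expand $F(n-1)=\sum_{l\geq 0}(\mp[p])^l e_l\bigl(1/[1],\dots,1/[n-1]\bigr)$, where $e_l$ is the elementary symmetric function. Note that $e_l(1/[1],\dots,1/[n-1]) = \zeta^{BZ}_{n-1}(\{1\}^l)$. The remaining inner sum over $m_1<\dots<m_r<n$ is $\zeta^{BZ}_{n-1}(\bm{k})$. The truncated sums $\zeta^{BZ}_N$ satisfy the $q$-stuffle relation for every $N$, because the extra term $(1-q)z_{k+l-1}$ comes from $\frac{q^{(a-1)m}}{[m]^a}\frac{q^{(b-1)m}}{[m]^b} = \frac{q^{(a+b-1)m}}{[m]^{a+b}}\bigl(1-(1-q)[m]\bigr)^{-1}$-type bookkeeping, namely $q^{m}=1-(1-q)[m]$. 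Hence $\zeta^{BZ}_{n-1}(\bm{k})\zeta^{BZ}_{n-1}(\{1\}^l)=\zeta^{BZ}_{n-1}(\bm{k}\ast_q\{1\}^l)$. Summing over $n\leq p-1$ then yields $\sum_l (\pm1)^l\zeta^{BZ}_{p-1}(\bm{k}\ast_q\{1\}^l,1)[p]^{l+1}$.

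The main obstacle I expect is sign and normalization bookkeeping. The naive expansion of $\prod_{j<n}(1-[p]/[j])$ gives $(-[p])^l$, while the statement has $+[p]^{l+1}$. I would first carefully recheck the telescoping direction and the identity $F(m)=\prod_{j\le m}(1-[p]/[j])$. If a sign discrepancy persists, I would instead express $F(n-1)=F(p-1)\prod_{n\leq j\leq p-1}(1-[p]/[j])^{-1}$ and expand the inverse as a geometric series. That would produce complete homogeneous sums, i.e. star-type sums with positive signs, which would then need to be reconverted via the reflection $j\mapsto p-j$, using $[p-j]=q^{-j}([p]-[j])$. At the same point I would also verify that the $q$-stuffle for truncated $BZ$ sums holds exactly as defined with the $(1-q)z_{k+l-1}$ term.
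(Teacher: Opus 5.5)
Your computation is correct and is essentially the paper's proof. The detour through \cref{prop:qhoffman} is a round trip. By \cref{prop:sumqbinom}, $F(i-1)-F(i)=(-1)^{i+1}q^{\binom{i}{2}}\binom{p}{i}_q$. So your telescoped terms $\frac{[p]}{[n]}F(n-1)\zeta^{BZ}_{n-1}(\bm{k})$ are exactly the original summands with $i=n<p$, and $F(p-1)\zeta^{BZ}_{p-1}(\bm{k})$ is the $i=p$ summand. The paper gets there directly from $(-1)^{i+1}q^{\binom{i}{2}}\binom{p}{i}_q=\frac{[p]}{[i]}\prod_{j<i}\bigl(1-\frac{[p]}{[j]}\bigr)$. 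It then uses the same elementary-symmetric expansion and the $q$-stuffle for truncated $BZ$-sums.

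The sign discrepancy you found is genuine. It is the printed statement, not your bookkeeping, that is wrong. The argument proves
\[
\sum_{i=1}^{p}(-1)^{i+1}q^{\binom{i}{2}}\binom{p}{i}_q\zeta^{BZ}_{i-1}(\bm{k})=(-1)^{p+1}q^{\binom{p}{2}}\zeta^{BZ}_{p-1}(\bm{k})+\sum_{l\geq 0}(-1)^{l}\zeta^{BZ}_{p-1}(\bm{k}\ast_q\{1\}^l,1)[p]^{l+1}.
\]
The paper's own proof has this $(-1)^l$ in its penultimate line and silently drops it in the last.

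Without the sign the statement is false. Take $p=3$, $\bm{k}=(1)$.
\begin{itemize}
\item The left side is $-q[3]+q^3\bigl(1+\frac{1}{[2]}\bigr)$.
\item On the right, $\zeta^{BZ}_2(1,1)=\frac{1}{[2]}$.
\item Also $\zeta^{BZ}_2\bigl((1)\ast_q(1),1\bigr)=\zeta^{BZ}_2(2,1)+(1-q)\zeta^{BZ}_2(1,1)=\frac{1}{[2]}$, since the depth-three term vanishes; all terms with $l\geq 2$ vanish as well.
\item Since $1-[3]=-q[2]$, the signed right side equals the left side, while the printed one differs from it by $\frac{2[3]^2}{[2]}$.
\end{itemize}
More generally, at $q=1$ every truncated sum involved is positive. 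So the two versions differ by $2\sum_{l\ \mathrm{odd}}\zeta_{p-1}(\bm{k}\ast\{1\}^l,1)p^{l+1}>0$ for every prime $p\geq\mathrm{dep}(\bm{k})+2$.

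Your fallback plan (inverting the product, complete homogeneous sums, the reflection $j\mapsto p-j$) therefore cannot work. Both sides are fixed rational functions in $q$, and no rearrangement removes a real sign. The right move was to test a small case and conclude with the signed identity. The same factor $(-1)^l$ must then be carried into \cref{thm:qrosen}.
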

\begin{proof}
    We see that
    \[[p - j] = q^{-j}([p] - [j])\]
    for integers $j, p$ with $0 < j < p$.
    Then, we obtain
    \begin{align}
      q^{\binom{i}{2}}\binom{p}{i}_q &= q^{\binom{i}{2}}\frac{[p]}{[i]} \prod_{j = 1}^{i-1} \frac{[p-j]}{[j]}\\
      &= (-1)^{i-1}\frac{[p]}{[i]}\prod_{j=1}^{i-1}\left(1 - \frac{[p]}{[j]} \right) \\
      &= (-1)^{i-1}\frac{1}{[i]}\sum_{l \geq 0}(-1)^{l} \zeta_{i-1}^{BZ}(\{1\}^{l})[p]^{l+1}.
    \end{align}
     This implies that
     \begin{align}
      \sum_{i=1}^{p} (-1)^{i+1}q^{\binom{i}{2}}\binom{p}{i}_q\zeta_{i-1}^{BZ}(\bm{k}) &= (-1)^{p+1}q^{\binom{p}{2}}\zeta_{p-1}^{BZ}(\bm{k}) + \sum_{i= 1}^{p-1} \frac{1}{[i]}\sum_{l \geq 0}(-1)^{l}\zeta_{i-1}^{BZ}(\bm{k})\zeta_{i-1}^{BZ}(\{1\}^{l})[p]^{l+ 1} \\
      &= (-1)^{p+1}q^{\binom{p}{2}}\zeta_{p-1}^{BZ}(\bm{k}) + \sum_{l \geq 0} \zeta_{p-1}^{BZ}(\bm{k} \ast_q \{1\}^l, 1)[p]^{l+1}
    \end{align}
    and completes the proof.
\end{proof}
\begin{thm}[\cite{HHT}*{Theorem 8.1.}]\label{prop:HHT}
    For an integer $N > 0$ and an index $\bm{k}$, we have
    \[H_N(\bm{k}) = (-1)^{\mathrm{dep}(\bm{k})} \sum_{\bm{k} \preceq \bm{l}} \zeta_{N}^{SZ}(\bm{l}).\]
\end{thm}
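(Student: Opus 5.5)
The plan is to prove the identity by induction on $N$, for all indices simultaneously, comparing how both sides change when $N$ is replaced by $N-1$. Write $\bm{k}=(\bm{k}',k_r)$ and set $R_N(\bm{k}) \coloneq (-1)^{\mathrm{dep}(\bm{k})}\sum_{\bm{k}\preceq\bm{l}}\zeta_N^{SZ}(\bm{l})$. The base case is $N$ small (for instance $N<\mathrm{dep}(\bm{k})$), where both sides are empty sums, together with the case $\bm{k}=\varnothing$, where $H_N(\varnothing)$ is read with the convention of \cref{prop:sumqbinom}. The right-hand side has the simpler recursion. $R_N(\bm{k})-R_{N-1}(\bm{k})$ consists exactly of the terms in which the largest summation variable equals $N$. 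Grouping the compositions $\bm{l}$ according to their last part (which merges $k_r$ with some tail of $\bm{k}'$) gives
\[
R_N(\bm{k})-R_{N-1}(\bm{k}) \;=\; \sum_{\text{last part }a}\frac{q^{aN}}{[N]^a}\,(\pm)\,R_{N-1}(\text{front of }\bm{k}).
\]
This is an explicit finite expression in the values $R_{N-1}$ of shorter indices.

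For the left-hand side I use the representation of \cref{prop:qhoffman},
\[
H_N(\bm{k})=\sum_{i=1}^{N+1}(-1)^{i+1}q^{\binom{i}{2}}\binom{N+1}{i}_q\zeta_{i-1}^{BZ}(\bm{k}).
\]
To this I apply one of the two $q$-Pascal rules, $\binom{N+1}{i}_q=q^i\binom{N}{i}_q+\binom{N}{i-1}_q$ or its mirror $\binom{N+1}{i}_q=\binom{N}{i}_q+q^{N+1-i}\binom{N}{i-1}_q$. I choose the rule so that one piece reproduces $H_{N-1}(\bm{k})$, or a $q$-shift of it. The other piece then contains $\binom{N}{m-1}_q=\frac{[m]}{[N]}\binom{N}{m}_q$, read in the original $m_r$-form of $H_N$. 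The factor $[m]$ lowers the last exponent: $q^{(k_r-1)m}/[m]^{k_r}$ becomes $q^{(k_r-2)m}/[m]^{k_r-1}$ up to a power $q^{\pm m}$. If $k_r\ge 2$, this gives a relation between $H_N(\bm{k})$, $H_{N-1}(\bm{k})$ and $H_{N}$ or $H_{N-1}$ of $(\bm{k}',k_r-1)$. If $k_r=1$, the last factor disappears entirely. The remaining sum $\sum_m(-1)^m q^{\binom{m}{2}}\binom{N}{m}_q\zeta^{BZ}_{m-1}(\bm{k}')$ is then, by \cref{prop:qhoffman} again, equal to $-H_{N-1}(\bm{k}')$. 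So $H_N(\bm{k})-H_{N-1}(\bm{k})=-\frac{q^N}{[N]}H_{N-1}(\bm{k}')$, and for $k_r=1$ the induction closes after matching this with the corresponding split of $R$.

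The main obstacle is matching the two recursions when $k_r\ge 2$. The naive recursion for $H$ lowers $k_r$ while keeping the same $N$, whereas the recursion for $R$ isolates the top variable. A check with $\bm{k}=(2)$ shows that $q^N/[N]\cdot R_N(1)$ is not $R_N(2)-R_{N-1}(2)$, so the two do not agree term by term. To resolve this I would run a double induction on $(N,\mathrm{wt}(\bm{k}))$ in which the lower-weight statement is also available at level $N$. I would then iterate the $H$-recursion $k_r-1$ times until the last entry is $1$. This expresses $H_N(\bm{k})-H_{N-1}(\bm{k})$ as $\sum_{a=1}^{k_r}\frac{q^{aN}}{[N]^a}\cdot(\text{shorter }H\text{'s})$, with the powers $q^{aN}/[N]^a$ accumulating exactly as the $SZ$ weights $q^{aN}/[N]^a$ do. The remaining work is the bookkeeping that identifies this sum with the grouped expression for $R_N(\bm{k})-R_{N-1}(\bm{k})$. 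The choice between the two $q$-Pascal forms is dictated by producing the $SZ$ numerator $q^{aN}$ rather than the $BZ$ numerator $q^{(a-1)N}$. If this bookkeeping becomes unwieldy, the fallback is a generating-function argument: form $\sum_N H_N(\bm{k})x^N$, use the $q$-binomial theorem for $\sum_N\binom{N}{m}_q x^N$, and compare coefficients directly.
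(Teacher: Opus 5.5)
The paper does not prove this statement; it is quoted from \cite{HHT}, so your argument has to stand on its own. Your $H$-side computations are correct, including $H_N(\bm{k})-H_{N-1}(\bm{k})=-\frac{q^N}{[N]}H_{N-1}(\bm{k}')$ for $k_r=1$. The gap is on the $R$-side: you have misread $\bm{k}\preceq\bm{l}$.

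By the paper's convention, $\bm{k}\preceq\bm{l}$ means that $\bm{k}$ is obtained from $\bm{l}$ by replacing commas with plus signs. So the sum runs over \emph{refinements} $\bm{l}$ of $\bm{k}$, with each $k_j$ split into a composition. Your grouping, in which the last part of $\bm{l}$ ``merges $k_r$ with some tail of $\bm{k}'$'', describes coarsenings instead. For coarsenings the identity is false. For example,
\[
H_2(2)=-q^2-q^3+\frac{q^5}{(1+q)^2}=-\zeta_2^{SZ}(2)-\zeta_2^{SZ}(1,1)\neq-\zeta_2^{SZ}(2).
\]
Your ``main obstacle'' comes from the same misreading. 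With the correct value $R_N(2)=-\zeta_N^{SZ}(2)-\zeta_N^{SZ}(1,1)$ one gets
\[
R_N(2)-R_{N-1}(2)=-\frac{q^{2N}}{[N]^2}-\frac{q^N}{[N]}\,\zeta^{SZ}_{N-1}(1)=-\frac{q^N}{[N]}\,\zeta^{SZ}_{N}(1)=\frac{q^N}{[N]}\,R_N(1).
\]
So the one-step recursions do agree. The step you leave as ``bookkeeping'' is the only place where the two sides are actually compared. As you set it up, it is matched against the wrong index set and cannot be completed. (Your own $\sum_{a=1}^{k_r}$ is the refinement grouping, which conflicts with your description of $R$.)

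Once the reading is fixed, the argument is shorter than you planned. Your mirror rule $\binom{N}{m}_q=\binom{N-1}{m}_q+q^{N-m}\binom{N-1}{m-1}_q$, together with $\binom{N-1}{m-1}_q=\frac{[m]}{[N]}\binom{N}{m}_q$, gives
\[
H_N(\bm{k})-H_{N-1}(\bm{k})=\frac{q^N}{[N]}\,H_N(\bm{k}',k_r-1)\qquad(k_r\ge 2).
\]
Here the extra $q^{m}$ turns $q^{\binom{m}{2}}$ back into $q^{\binom{m+1}{2}}$.

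On the $R$-side, take $k_r\ge2$. The compositions of $k_r$ correspond two-to-one to compositions $\bm{d}$ of $k_r-1$: either append a $1$ to $\bm{d}$, or add $1$ to its last part. For the corresponding refinements $(\bm{l}',\cdot)$, the two terms whose top variable equals $N$ sum to $\frac{q^N}{[N]}\zeta_N^{SZ}(\bm{l}',\bm{d})$. For $k_r=1$ the last part is forced to be $1$. Hence $R_N$ satisfies exactly the same two recursions as $H_N$.

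Now induct on $N$, and for fixed $N$ induct on $k_r$. The base cases are $N=0$ and $\bm{k}=\varnothing$, where both sides equal $0$ and $1$ respectively. This completes the proof. Your iterated version,
\[
\sum_{a=1}^{k_r-1}\frac{q^{aN}}{[N]^a}H_{N-1}(\bm{k}',k_r-a)-\frac{q^{k_rN}}{[N]^{k_r}}H_{N-1}(\bm{k}'),
\]
also matches directly once $a$ is read as the last piece of $k_r$, with $1\le a\le k_r$. No generating-function fallback is needed.
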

\begin{proof}[Proof of \cref{thm:qrosen}]
    Now substitute $N = p - 1$ in \cref{prop:qhoffman} and \cref{prop:HHT}.
    Combining the above formulas and taking modulo $[p]^n$ for $n \geq 1$, we obtain the desired result.
\end{proof}

Applying the maps $\phi_{\Ahat}$ and $\phi_{\Shat}$ to \cref{thm:qrosen}, we have the following result.
\begin{cor}[=\cref{prop:rosen} (\cite{Ros1}*{Theorem 4.5})]
    For an index $\bm{k} = (k_1, \cdots, k_r)$, we have
        \[
            \zeta_{\Ahat}(\bm{k}) + \sum_{l \geq 0} \zeta_{\Ahat}(\bm{k}\ast \{1\}^l, 1)\bm{p}^{l + 1} = (-1)^{\mathrm{dep}(\bm{k})} \sum_{\bm{k} \preceq \bm{l}} \zeta_{\Ahat}(\bm{l}).
        \]
\end{cor}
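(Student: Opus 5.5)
The plan is to apply the algebraic limit $\phi_{\Ahat}$ to both sides of \cref{thm:qrosen}, using that $\phi_{\Ahat}$ is a $\QQ[q]$-algebra homomorphism $\Qhat \to \Ahat$ sending $q \mapsto 1$. By \cite{TT}*{Theorem 4.3}, $\phi_{\Ahat}(\zeta_{\Qhat}^{BZ}(\bm{l})) = \phi_{\Ahat}(\zeta_{\Qhat}^{SZ}(\bm{l})) = \zeta_{\Ahat}(\bm{l})$ for every index $\bm{l}$. Also $\phi_{\Ahat}([\bm{p}]) = \bm{p}$, since $[p]|_{q=1} = p$. The right-hand side therefore maps directly to $(-1)^{\mathrm{dep}(\bm{k})}\sum_{\bm{k}\preceq\bm{l}}\zeta_{\Ahat}(\bm{l})$.

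For the left-hand side there are three things to check.

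\textbf{The prefactor.} Every term with $l\ge 1$ in the defining sum of $q^{\frac{\bm{p}(\bm{p}-1)}{2}}$ carries a factor $(1-q)$, which vanishes at $q=1$. Hence $\phi_{\Ahat}(q^{\frac{\bm{p}(\bm{p}-1)}{2}}) = 1$ componentwise for every $n$.

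\textbf{The $q$-stuffle.} The recursive definitions of $\ast_q$ and $\ast$ differ only by the term $(1-q)(w\ast_q w')z_{k+l-1}$. By induction on length, $\bm{k}\ast_q\{1\}^l = \bm{k}\ast\{1\}^l + (1-q)\,u$ for some $u \in \mathfrak{H}^1[1-q]$. Since $\zeta^{BZ}_{\Qhat}$ is $\QQ[1-q]$-linear and $\phi_{\Ahat}(1-q)=0$, we get $\phi_{\Ahat}(\zeta_{\Qhat}^{BZ}(\bm{k}\ast_q\{1\}^l,1)) = \zeta_{\Ahat}(\bm{k}\ast\{1\}^l,1)$.

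\textbf{The infinite sum.} This is the only point needing care: $\phi_{\Ahat}$ must be exchanged with the infinite sum over $l$. The sum converges $[\bm{p}]$-adically in $\Qhat$. At level $n$, the terms with $l+1\ge n$ vanish in $Z_{p,n}$, so at each level only finitely many terms appear. Because $\phi_{\Ahat}$ is defined levelwise, sending $\mathcal{Q}_n \to \mathcal{A}_n$ compatibly, the image at level $n$ is the corresponding finite sum with $p$ in place of $[p]$, modulo $p^n$. This is exactly the level-$n$ component of $\sum_{l}\zeta_{\Ahat}(\bm{k}\ast\{1\}^l,1)\bm{p}^{l+1}$, so $\phi_{\Ahat}$ is continuous for the respective adic topologies and commutes with the sum.

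Combining the images of all terms gives the stated identity for $\Ahat$-MZVs.
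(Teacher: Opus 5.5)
Your proposal is correct and follows the same route as the paper, which obtains this corollary simply by applying $\phi_{\Ahat}$ to \cref{thm:qrosen}. You also supply the details the paper leaves implicit: the prefactor $q^{\frac{\bm{p}(\bm{p}-1)}{2}}$ maps to $1$, $\ast_q$ reduces to $\ast$ modulo $(1-q)$, and $\phi_{\Ahat}$ commutes with the $[\bm{p}]$-adically convergent sum levelwise; all of these are verified correctly.
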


\begin{cor}
    For an index $\bm{k} = (k_1, \cdots, k_r)$, we have
    \[
        e^{\pi i t}\widehat{\xi}(\bm{k}) + \sum_{l \geq 0} \widehat{\xi}(\bm{k}\ast \{1\}^l, 1)t^{l + 1} = (-1)^{\mathrm{dep}(\bm{k})} \sum_{\bm{k} \preceq \bm{l}} \overline{\widehat{\xi}(\bm{l})}
    \]
    and
    \[
        \zeta_{\Shat}(\bm{k}) + \sum_{l \geq 0} \zeta_{\Shat}(\bm{k}\ast \{1\}^l, 1)t^{l + 1} = (-1)^{\mathrm{dep}(\bm{k})} \sum_{\bm{k} \preceq \bm{l}} \zeta_{\Shat}(\bm{l}).
    \]
\end{cor}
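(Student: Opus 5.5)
We prove the corollary by applying the analytic limit $\phi_{\Shat}$ to both sides of \cref{thm:qrosen}. This requires three ingredients: $\phi_{\Shat}$ is a $\QQ$-algebra map on $\widehat{\mathcal{O}}$, the evaluations of $\phi_{\Shat}$ on $\zeta^{BZ}_{\Qhat}$ and $\zeta^{SZ}_{\Qhat}$ from the preceding theorems, and the images of the three auxiliary quantities $q^{\frac{\bm{p}(\bm{p}-1)}{2}}$, $[\bm{p}]$ and $1-q$.

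\emph{Step 1: the auxiliary elements.} Under $\widehat{ev}$, $[p]$ is sent to $[p]_{q_p(t)}=t$, so $\phi_{\Shat}([\bm{p}])=t$.

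Since $q_p(0)=\zeta_p$ and $q_p(t)\to 1$ coefficientwise as $p\to\infty$ (by the asymptotics of \cite{TT}*{Proposition A.2}), we get $\phi_{\Shat}(1-q)=0$. This fact was already used in the paper.

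The main computation is $\phi_{\Shat}(q^{\frac{\bm{p}(\bm{p}-1)}{2}})$. Its $p$-component evaluates to
\[
\sum_{l<n}\binom{\frac{p-1}{2}}{l}\bigl(-(1-q_p(t))t\bigr)^l .
\]
Here $1-q_p(t)=1-\zeta_p+O(t)$, and $(1-q_p)t$ has size $O(1/p)$ in each coefficient, because $1-\zeta_p\approx -2\pi i/p$ and the higher coefficients are $O(p^{-(l+1)})$. Meanwhile $\binom{(p-1)/2}{l}\sim (p/2)^l/l!$. So the $l$-th term tends to $\bigl(-\tfrac p2(1-q_p)t\bigr)^l/l!$.

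To make this precise, compute $\lim_{p\to\infty} p(1-q_p(t))$ coefficientwise. From $t=[p]_{q}=(1-q^p)/(1-q)$ with $q=e^{2\pi i/p}e^{-u/p}$, one expects $p(1-q_p(t))\to -2\pi i$ in every coefficient, i.e.\ the higher $t$-coefficients vanish in the limit. Then the limit is $\sum_l (\pi i t)^l/l!=e^{\pi i t}$. This matches the $t$-adic analogue of Theorem \ref{prop:tt}, where the analogous element gives $e^{-\pi i t}$ up to sign conventions. I would verify the limit using the precise expansion in \cite{TT}*{Proposition A.2}, and I expect this step to be the main obstacle: interchanging the limit with the sum requires uniform control, which is available because only finitely many $l<n$ occur for each $n$.

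\emph{Step 2: the MZV terms.} By the earlier theorems, $\phi_{\Shat}(\zeta^{BZ}_{\Qhat}(\bm{k}))=\widehat{\xi}(\bm{k})$ and $\phi_{\Shat}(\zeta^{SZ}_{\Qhat}(\bm{l}))=\overline{\widehat{\xi}(\bm{l})}$. The $q$-stuffle $\bm{k}\ast_q\{1\}^l$ differs from the ordinary stuffle $\bm{k}\ast\{1\}^l$ only by terms carrying a factor $(1-q)$, and these are killed by $\phi_{\Shat}$. Hence $\phi_{\Shat}(\zeta^{BZ}_{\Qhat}(\bm{k}\ast_q\{1\}^l,1))=\widehat{\xi}(\bm{k}\ast\{1\}^l,1)$.

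The infinite sum over $l$ converges $[\bm{p}]$-adically, and $\phi_{\Shat}$ is compatible with this convergence. The reason is that the $t^m$-coefficient of the image depends only on the $l<m$ terms, which are images of components modulo $[p]^m$. All the elements involved lie in $\widehat{\mathcal{O}}$, since their images exist by the cited theorems.

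\emph{Step 3: conclusion.} Combining Steps 1 and 2 yields the first identity. Reducing it modulo $\pi i\mathcal{Z}[\pi i][[t]]$ gives the second. Indeed, $e^{\pi i t}\equiv 1$ there, and both $\widehat{\xi}$ and $\overline{\widehat{\xi}}$ reduce to $\zeta_{\Shat}$ by the theorems of the previous section. Here we use that the coefficients of $e^{\pi i t}$ and of $\widehat{\xi}$ lie in $\mathcal{Z}[\pi i]$, so the reduction is a ring map.
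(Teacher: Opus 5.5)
Your proposal is essentially the paper's proof: apply $\phi_{\Shat}$ to \cref{thm:qrosen}, where the only non-formal input is $\phi_{\Shat}(q^{\frac{\bm{p}(\bm{p}-1)}{2}})=e^{\pi i t}$, which the paper simply cites from \cite{TT}*{Appendix B.2} while you derive it from $p(1-q_p(t))\to -2\pi i$ (this is immediate from the $O(p^{-(l+1)})$ bound on the coefficients of $q_p(t)$, so it is not a real obstacle), and you additionally spell out the $q$-stuffle, convergence and mod-$\pi i$ steps that the paper leaves tacit. One caveat, which you inherit from the paper rather than introduce: the final equality in the proof of \cref{prop:qrosen} drops the factor $(-1)^l$ present in the line before it (for $p=3$, $\bm{k}=(1)$ the printed right-hand side exceeds the left-hand side by $2[3]^2/[2]$), so \cref{thm:qrosen}, and hence the identities your argument actually yields, should carry $\sum_{l\geq 0}(-1)^l\widehat{\xi}(\bm{k}\ast\{1\}^l,1)t^{l+1}$ and $\sum_{l\geq 0}(-1)^l\zeta_{\Shat}(\bm{k}\ast\{1\}^l,1)t^{l+1}$ on the left.
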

\begin{proof}
   We see that 
    \[
        \phi_{\Shat}(q^{\frac{\bm{p}(\bm{p} - 1)}{2}}) = e^{\pi i t}
    \]
from \cite{TT}*{Appendix B.2.}.
    Applying this to \cref{thm:qrosen}, we obtain the desired result.
\end{proof}

The result for $\Shat$-MZV can be viewed as an explicit formulation of \cite{Jar}*{Proposition 4.3.1}.
Our proof only uses series manipulations and the limit argument.

\subsection{The $q$-analogue of Maesaka--Seki--Watanabe's duality formula}
The following is the $q$-analogue of \cref{prop:msw}.
\begin{thm}\label{thm:qmsw}
    For an index $\bm{k}$, we have
    \[
    \zeta_{\Qhat}^{BZ}(\bm{k}) = (-q^{-\bm{p}})^{\mathrm{dep}( \bm{k})} \sum_{l \geq 0} \left(\sum_{\begin{subarray}{c} \bm{l} \in (\mathbb{Z}_{\geq 0})^{\mathrm{dep}( \bm{k})} \\ \mathrm{wt}(\bm{l}) = l \end{subarray}}\sum_{\bm{l} \oplus \bm{k} \preceq \bm{m} \preceq \bm{l} \oslash \bm{k}} \zeta_{\Qhat}^{SZ}(\bm{m}) \right)([\bm{p}]q^{-\bm{p}})^{l},
    \]
    where $q^{-\bm{p}} = \sum_{j \geq 0}([\bm{p}](1-q))^j  \in \Qhat$.
\end{thm}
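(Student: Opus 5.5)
The natural approach is the $q$-version of the Maesaka--Seki--Watanabe argument: the reflection $m \mapsto p-m$ applied to the truncated sum $\zeta_{p-1}^{BZ}(\bm{k})$, followed by a $[p]$-adic expansion. It should be done at the level of each prime $p$ and then reduced modulo $[p]^n$.

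\emph{Reflection identities.} From $[p-m] = q^{-m}([p]-[m])$ (used in the proof of \cref{prop:qrosen}) we get $[m] = q^{m}\cdot\bigl([p]q^{-p}-[p-m]q^{-p}\bigr)\cdot q^{p}$, or more usefully, with $m' = p-m$,
\[
[m] = [p]-q^{m}[m'] ,\qquad \frac{q^{(k-1)m}}{[m]^{k}} = \frac{q^{(k-1)m}}{(-q^{m}[m'])^{k}}\Bigl(1-\frac{[p]}{q^{m}[m']}\Bigr)^{-k}.
\]
Writing $q^{m} = q^{p}q^{-m'}$, each factor becomes $(-1)^k q^{-p}\, q^{m'}\,(q^{m'}/[m'])^{k}\cdot q^{-?}$; more precisely
\[
\frac{q^{(k-1)m}}{[m]^{k}} = (-1)^{k} q^{-p}\frac{q^{km'}}{[m']^{k}}\sum_{l\ge 0}\binom{k+l-1}{l}\Bigl(\frac{[p]q^{-p}q^{m'}}{[m']}\Bigr)^{l}.
\]
So one summand in $\zeta^{BZ}_{p-1}$ transforms into the SZ-weight $q^{(k+l)m'}/[m']^{k+l}$ times $([p]q^{-p})^l$, with an overall $(-1)^k q^{-p}$ per entry. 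This explains the factor $(-q^{-\bm{p}})^{\mathrm{dep}(\bm{k})}$ and the variable $[\bm{p}]q^{-\bm{p}}$, where $q^{-\bm{p}}=1/(1-(1-q)[\bm{p}])$ in $\Qhat$ as defined in the statement. Substituting into $\zeta_{p-1}^{BZ}(\bm{k})$ reverses the order $m_1<\dots<m_r$ and gives, in $Z_{p,n}$,
\[
\zeta^{BZ}_{p-1}(\bm{k}) = (-1)^{\mathrm{wt}(\bm{k})}(q^{-p})^{r}\sum_{\bm{l}}\prod_j\binom{k_j+l_j-1}{l_j}\,\zeta^{SZ}_{p-1}(\overline{\bm{k}\oplus\bm{l}})([p]q^{-p})^{\mathrm{wt}(\bm{l})}.
\]
This is the $q$-analogue of the reversal formula, and it is the first key step.

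\emph{Combinatorial identity.} Next I would combine this with a $q$-analogue of Hoffman's duality relating $\zeta^{SZ}$ of reversed indices to sums over $\bm{k}\preceq\bm{l}$, exactly as Maesaka--Seki--Watanabe do at $q=1$. \cref{thm:qrosen} together with \cref{prop:HHT} supplies $\sum_{\bm{k}\preceq\bm{l}}\zeta^{SZ}$-type expressions. Alternatively, and this is what I would try first, I would avoid the reversal altogether and prove directly a finite identity generalizing \cref{prop:HHT}. The shape of the target, $\sum_{\bm{l}\oplus\bm{k}\preceq\bm{m}\preceq\bm{l}\oslash\bm{k}}$, is precisely the MSW "connector" formula. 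So I would set up a $q$-connector $C(m,n)$ and show that $\sum_{0<m_1<\dots<m_r<p}\prod q^{(k_j-1)m_j}/[m_j]^{k_j}$ equals the generating function on the right by a telescoping/connector argument in which each step uses only $[p-m]=q^{-m}([p]-[m])$. The binomial coefficients $\binom{k+l-1}{l}$ should then be absorbed by the sum over $\bm{m}$ between $\bm{l}\oplus\bm{k}$ and $\bm{l}\oslash\bm{k}$: the count of such $\bm{m}$ with fixed structure matches the binomial expansion, via the star-to-nonstar conversion of $\zeta^{SZ}$ at $q$-level.

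\emph{Main obstacle.} The difficulty is making the $q$-powers match exactly so that only $\zeta^{SZ}$ (with numerator $q^{km}$) appears, with no stray $(1-q)$ terms. At $q=1$ the index manipulation $\bm{l}\oslash\bm{k}$ comes from expanding $1/m^{k}$ as iterated sums of $1/m$'s, but for $q$-integers the stuffle picks up $(1-q)$ corrections. I expect this to be resolved by the fact that the SZ model satisfies the clean identity $\frac{q^{a m}}{[m]^a}\cdot\frac{q^{bm}}{[m]^b}=\frac{q^{(a+b)m}}{[m]^{a+b}}$, so merges carry no correction. I would verify the identity carefully in depth $1$ (a single $k$, where it reads $\frac{q^{(k-1)m}}{[m]^k}$ against $\sum_{k+l\preceq \bm{m}\preceq (l+1,\{1\}^{k-1})}$), then induct on depth via the connector. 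Finally, I would reduce modulo $[p]^n$. The sum over $l$ is finite modulo $[p]^n$, and $q^{-p}$ agrees with the stated series, so this yields the identity in $\Qhat$.
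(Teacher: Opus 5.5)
There is a genuine gap, and it starts with your first concrete step: the reflection expansion is miscomputed. From $[m]=-q^{m}[m']\bigl(1-\frac{[p]}{q^{m}[m']}\bigr)$ and $q^{-m}=q^{-p}q^{m'}$ one gets
\[
\frac{q^{(k-1)m}}{[m]^{k}}=(-1)^{k}q^{-p}\sum_{l\ge 0}\binom{k+l-1}{l}\frac{q^{(l+1)m'}}{[m']^{k+l}}\bigl([p]q^{-p}\bigr)^{l}.
\]
The numerator is $q^{(l+1)m'}$, not $q^{(k+l)m'}$. This is of SZ type only for $k=1$; for $k\geq 2$ the factor $q^{(k-1)m}$ spoils it.

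Hence your displayed $q$-reversal formula in terms of $\zeta^{SZ}_{p-1}(\overline{\bm{k}\oplus\bm{l}})$ is false. For $\bm{k}=(2)$ it would give $\zeta^{BZ}_{p-1}(2)\equiv\zeta^{SZ}_{p-1}(2) \bmod [p]$. But the difference is $(1-q)\overline{\zeta}_{p-1}(1)$, which modulo $[p]$, i.e.\ at $q=\zeta_p$, equals $-\frac{p-1}{2}(1-\zeta_p)^{2}\neq 0$ for odd $p$.

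Even a correct reversal would not finish the proof. It produces reversed indices with binomial weights, while the target has non-reversed indices $\bm{m}$ with coefficient one. Passing between the two is itself a duality, not a counting argument. Already at $q=1$ in depth one, reversal gives $\zeta_{\Ahat}(2)=\sum_{l}(l+1)\zeta_{\Ahat}(l+2)\bm{p}^{l}$, while the theorem asserts $\zeta_{\Ahat}(2)=-\sum_{l}\bigl(\zeta_{\Ahat}(l+2)+\zeta_{\Ahat}(l+1,1)\bigr)\bm{p}^{l}$.

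The missing ingredient is a finite identity, valid for every $N$ and not involving $p$, which rewrites $\zeta^{BZ}_{N}(\bm{k})$ so that only exponent-one factors ever need to be reflected. This is Tsumura's $q$-analogue of the MSW formula (\cref{prop:qmsw}). It expresses $\zeta^{BZ}_{N}(\bm{k})$ as the sum of $\prod_{j}q^{n_{j,2}+\cdots+n_{j,k_j}}/([N+1-n_{j,1}][n_{j,2}]\cdots[n_{j,k_j}])$ over $0<n_{j,1}\le\cdots\le n_{j,k_j}\le N$ with $n_{j,k_j}<n_{j+1,1}$.

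The paper puts $N=p-1$ and expands only $1/[p-n_{j,1}]=-\sum_{l\ge0}\frac{q^{(l+1)n_{j,1}}}{[n_{j,1}]^{l+1}}q^{-p(l+1)}[p]^{l}$. This is exactly your expansion in the harmless case $k=1$, and the remaining factors $q^{n_{j,i}}/[n_{j,i}]$ are already of SZ type. Then two things combine: the non-strict inequalities inside each block, and the clean merging $\frac{q^{am}}{[m]^{a}}\cdot\frac{q^{bm}}{[m]^{b}}=\frac{q^{(a+b)m}}{[m]^{a+b}}$ that you correctly singled out. Together they produce $\sum_{\bm{l}\oplus\bm{k}\preceq\bm{m}\preceq\bm{l}\oslash\bm{k}}\zeta^{SZ}_{p-1}(\bm{m})$ with prefactor $\prod_{j}(-q^{-p})([p]q^{-p})^{l_j}$.

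Your connector idea might be a way to prove Tsumura's identity, but you never state which identity the connector is supposed to establish. The depth-one check you propose, comparing a single summand $q^{(k-1)m}/[m]^{k}$ with a sum of $\zeta^{SZ}$-values, is not meaningful termwise. In depth one the needed statement is already the nontrivial identity $\zeta^{BZ}_{N}(k)=\sum_{0<n_1\le\cdots\le n_k\le N}\frac{q^{n_2+\cdots+n_k}}{[N+1-n_1][n_2]\cdots[n_k]}$.
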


Our proof for \cref{thm:qmsw} is also based on the $q$-analogue of the proof for \cref{prop:msw} in \cite{MSW}*{Theorem 5.3}.
Therefore, the following formula, which is a $q$-analogue of the MSW formula, plays an important role.

\begin{thm}[\cite{Tsu}*{Theorem 1.2}]\label{prop:qmsw}
    For an integer $N>0$ and an index $\bm{k} = (k_1, \dots, k_r)$, it holds that
    \begin{align}
        \zeta_{N}^{BZ}(\bm{k}) = \sum_{\begin{subarray}{c} 0 < n_{j, 1} \leq \cdots \leq n_{j, k_j} \leq N \\ n_{j, k_j} < n_{j+1, 1} \end{subarray}} \prod_{j = 1}^{r} \frac{q^{n_{j, 2} + \cdots + n_{j, k_j}}}{[N + 1 - n_{j, 1}][n_{j, 2}]\cdots [n_{j, k_j}]}.
    \end{align}
\end{thm}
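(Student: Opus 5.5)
The plan is to prove the identity for fixed $N$ by a \emph{connected sum} argument, the $q$-version of the method used for the MSW formula. The left-hand side $\zeta_N^{BZ}(\bm{k})$ will be transformed, one letter at a time, into the right-hand side. I set up a family of mixed sums $Z_N(\bm{k};a,b)$ indexed by a position $(a,b)$, where $0\le a\le r$ and $0\le b\le k_{a+1}$:
\begin{itemize}
\item the first $a$ entries, plus $b$ units of the $(a+1)$-st entry, are already written in the ``dual'' form of the right-hand side, namely blocks $\frac{q^{n_{j,2}+\cdots+n_{j,k_j}}}{[N+1-n_{j,1}][n_{j,2}]\cdots[n_{j,k_j}]}$;
\item the remaining part is still in the harmonic form $\frac{q^{(k_j-1)m_j}}{[m_j]^{k_j}}$ over $m_{a+1}<\cdots<m_r\le N$;
\item the two parts are glued by a connector $C_N(n,m)$ depending on the last dual variable $n$ and the first remaining harmonic variable $m$.
\end{itemize}
Then $Z_N(\bm{k};0,0)=\zeta_N^{BZ}(\bm{k})$ and $Z_N(\bm{k};r,0)$ equals the right-hand side of \cref{prop:qmsw}, provided the connector reduces to $1$ at both ends.

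The natural candidate for the connector, suggested by the factors $[N+1-n]$ and by the proof of \cref{prop:qhoffman} (where $q$-binomials $\binom{N}{m}_q$ appear), is a ratio of $q$-binomial coefficients, for instance
\[
C_N(n,m)=q^{\epsilon(n,m)}\,\frac{\binom{N}{n}_q}{\binom{N}{m}_q}
\]
or a close variant. The $q$-power $q^{\epsilon(n,m)}$ is to be fixed so that the numerators $q^{(k-1)m}$ are matched. I would fix the exact form by testing depth one, where the claim reads
\[
\sum_{m\le N}\frac{q^{(k-1)m}}{[m]^k}=\sum_{n_1\le\cdots\le n_k\le N}\frac{q^{n_2+\cdots+n_k}}{[N+1-n_1][n_2]\cdots[n_k]}.
\]
Checking this with $k=1$ already forces the identity $\sum_{m\le N}\frac1{[m]}=\sum_{n\le N}\frac{1}{[N+1-n]}$, which holds by $n\mapsto N+1-n$. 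It also indicates that a reflection $n\mapsto N+1-n$ is built into the connector.

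The key steps are two transport relations. The first removes one power of $1/[m]$ from the harmonic side and creates a new dual variable $n$ with $n\le m$ carrying $q^{n}/[n]$. The second, at the start of each block, converts the step $m_j<m_{j+1}$ into a new dual variable $n_{j+1,1}>n_{j,k_j}$ carrying $1/[N+1-n_{j+1,1}]$. Each relation should reduce to a one-variable telescoping identity for the connector, schematically
\[
\frac{q^{?m}}{[m]}\,C_N(n,m)=\sum_{n'\le m}\frac{q^{n'}}{[n']}\bigl(C_N(n',m)-C_N(n',m-1)\bigr)
\]
and its analogue with $1/[N+1-n']$. These would be proved from the $q$-Pascal rule $\binom{N+1}{i}_q=q^i\binom{N}{i}_q+\binom{N}{i-1}_q$ together with $[N-m]=q^{-m}([N]-[m])$, exactly as in \cref{prop:sumqbinom} and \cref{prop:qrosen}. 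Chaining the relations from $(0,0)$ to $(r,0)$ then gives the theorem.

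The main obstacle is pinning down the connector, especially its $q$-power prefactor. The relations are meant to be exact (no extra $(1-q)$-terms), whereas a wrong exponent produces correction terms involving $z_{k+l-1}$, of the kind that appear in $\ast_q$. I would first guess the connector by brute-force expansion for $N=2,3$ and $\bm{k}=(2)$, $(1,1)$, then verify the two transport identities in general. If a clean connector fails, the fallback is induction on $N$. One compares $Z_N$ with $Z_{N-1}$ after the substitution $n\mapsto n-1$ in the dual variables, so that $[N+1-n]$ becomes $[N-(n-1)]$. The boundary terms $n_{j,1}=1$ are then matched against the new top term $q^{(k_r-1)N}/[N]^{k_r}$ of $\zeta^{BZ}_N$.
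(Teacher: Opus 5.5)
The paper does not prove \cref{prop:qmsw}; it quotes it from the literature. So your proposal has to stand on its own, and as written it proves nothing. The connector and the two transport identities are the whole content of a connected-sum argument, and you determine neither. Your displayed relation is not even well-formed: its left side depends on $n$ and its right side does not. Everything else is deferred to a brute-force search or an unexecuted induction on $N$.

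Worse, the set-up you fix cannot be completed. Suppose, as you describe, the dual form sits on the left and the harmonic form on the right, and a single connector $C_N(n,m)$ links the last dual variable $n$ to the current harmonic variable $m$ in every state. Suppose also that each transport peels one factor $1/[m]$ off $m$, pointwise in $m$. Then you get two relations:
\begin{itemize}
\item the $x$-type relation $\frac{q^{\alpha m}}{[m]}C_N(n,m)=\sum_{n\le n'\le m}\frac{q^{n'}}{[n']}C_N(n',m)$;
\item the $y$-type relation $\frac{1}{[m]}C_N(n,m)=\sum_{n<n'\le m}\frac{1}{[N+1-n']}C_N(n',m)$.
\end{itemize}
Differencing in $n$ and setting $q=1$ gives $C(n+1,m)=-\frac{m-n}{n}C(n,m)$ and $C(n+1,m)=\frac{N-n}{N-n+m}C(n,m)$. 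These ratios have opposite signs, so $C_N(\cdot,m)$ would vanish on $1\le n\le m$ for every $m\ge 2$. The $y$-relation at $n=0$ then contradicts $C_N(0,m)=1$.

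Your candidate $q^{\epsilon}\binom{N}{n}_q/\binom{N}{m}_q$ has ratio $\frac{N-n}{n+1}$ at $q=1$, so it satisfies neither recursion. In this orientation you would actually need two connectors: an alternating one inside blocks, and its partial sums at block boundaries. For $n=1$ and $q=1$ the latter is $1+(-1)^m\binom{N-1}{m-1}$. That is not the one-variable telescoping you envisage.

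The missing idea is to run the argument in the opposite direction. Keep the harmonic form on the processed left part and the dual form on the rest, with connector $\binom{n}{m}_q/\binom{N}{m}_q$, the discrete analogue of $t^m$ in the iterated-integral proof.

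Concretely, read $\bm{k}$ as the word $yx^{k_1-1}\cdots yx^{k_r-1}$. For a prefix $w$, define:
\begin{itemize}
\item $D_w(n)$, the dual sum over the letters of $w$ with all variables $\le n$: a $y$ adds a strictly larger variable $\nu$ with weight $1/[N+1-\nu]$, and an $x$ adds a weakly larger one with weight $q^{\nu}/[\nu]$;
\item $h_w(m)$, the corresponding $\zeta^{BZ}$-type harmonic sum with last variable $m$: a $y$ adds a larger $m'$ with weight $1/[m']$, and an $x$ multiplies by $q^{m}/[m]$.
\end{itemize}
Starting from $D_\varnothing\equiv 1$, induction on $w$ gives $D_w(n)=\sum_m h_w(m)\binom{n}{m}_q/\binom{N}{m}_q$. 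The two induction steps are
\[
\sum_{n'=1}^{n}\frac{q^{n'}}{[n']}\binom{n'}{m}_q=\frac{q^m}{[m]}\binom{n}{m}_q,\qquad \sum_{n'=1}^{n}\frac{1}{[N+1-n']}\frac{\binom{n'-1}{m}_q}{\binom{N}{m}_q}=\sum_{m<m'\le N}\frac{1}{[m']}\frac{\binom{n}{m'}_q}{\binom{N}{m'}_q}.
\]
Both follow by comparing increments in $n$ using $\binom{n}{m}_q-\binom{n-1}{m}_q=q^{n-m}\binom{n-1}{m-1}_q$; the second also uses $\sum_{j=b}^{a}q^{j-b}\binom{j}{b}_q=\binom{a+1}{b+1}_q$. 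The connector equals $1$ at $n=N$, so evaluating at $n=N$ gives the theorem. No extra power of $q$ and no reflection $n\mapsto N+1-n$ appear.
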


\begin{proof}[Proof of \cref{thm:qmsw}]
    For integers $n, p$ with $0 < n< p$, we see that
    \begin{align}
      \frac{1}{[p-n]} &= -\frac{1}{q^{p-n}[n]}\frac{1}{1- \frac{[p]}{q^{p-n}[n]}} \\
      &= -\frac{1}{q^{p}}\frac{q^n}{[n]}\sum_{l = 0}^{\infty} \frac{q^{nl}}{q^{pl}}\frac{[p]^{l}}{[n]^{l}} \\
      &= -\sum_{l \geq 0} \frac{q^{(l+1)n}}{[n]^{l+1}} q^{-p(l+1)}[p]^{l}.
    \end{align}
    Now consider the case $N = p - 1$ in \cref{prop:qmsw}. 
    Calculate the summand by using the above expansion and simplify the sum, we obtain the desired result.
\end{proof}

Applying the maps $\phi_{\Ahat}$ and $\phi_{\Shat}$ to \cref{thm:qmsw}, we have the following result.
\begin{cor}[=\cref{prop:msw} (\cite{MSW}*{Theorem 5.3})]
    For an index $\bm{k} = (k_1, \cdots , k_r)$, we have
        \[
            \zeta_{\Ahat}(\bm{k}) = (-1)^{\mathrm{dep}(\bm{k})} \sum_{l \geq 0} \left(\sum_{\begin{subarray}{c} \bm{l} \in (\mathbb{Z}_{\geq 0})^{\mathrm{dep}(\bm{k})} \\ \mathrm{wt}(\bm{l}) = l \end{subarray}}\sum_{\bm{l} \oplus \bm{k} \preceq \bm{m} \preceq \bm{l} \oslash \bm{k}} \zeta_{\Ahat}(\bm{m}) \right)\bm{p}^{l}.
        \]
\end{cor}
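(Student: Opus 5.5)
The plan is to apply the algebraic limit $\phi_{\Ahat}$ to both sides of \cref{thm:qmsw}. Since $\phi_{\Ahat}$ is a $\QQ[q]$-algebra homomorphism sending $q\mapsto 1$, it suffices to compute the images of every ingredient and then to justify passing $\phi_{\Ahat}$ through the infinite sum over $l$.

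\textbf{Step 1: the scalar factors.} First, $\phi_{\Ahat}([\bm{p}])=\bm{p}$, because the representative $[p]$ evaluates to $p$ at $q=1$. Next, $q^{-\bm{p}}=\sum_{j\ge 0}([\bm{p}](1-q))^j$. At level $n$ this is a finite sum modulo $[p]^n$, and every term with $j\ge 1$ carries a factor $(1-q)$, which vanishes at $q=1$. Hence $\phi_{\Ahat}(q^{-\bm{p}})=1$. Consequently
\[
\phi_{\Ahat}\bigl((-q^{-\bm{p}})^{\mathrm{dep}(\bm{k})}\bigr)=(-1)^{\mathrm{dep}(\bm{k})}
\quad\text{and}\quad
\phi_{\Ahat}\bigl(([\bm{p}]q^{-\bm{p}})^l\bigr)=\bm{p}^l.
\]

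\textbf{Step 2: the zeta values.} By \cite{TT}*{Theorem 4.3}, we have $\phi_{\Ahat}(\zeta^{BZ}_{\Qhat}(\bm{k}))=\zeta_{\Ahat}(\bm{k})$ and $\phi_{\Ahat}(\zeta^{SZ}_{\Qhat}(\bm{m}))=\zeta_{\Ahat}(\bm{m})$.

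\textbf{Step 3: continuity (the main point to check).} The right-hand side of \cref{thm:qmsw} is an infinite series converging $[\bm{p}]$-adically, and we need
\[
\phi_{\Ahat}\Bigl(\sum_l c_l\,([\bm{p}]q^{-\bm{p}})^l\Bigr)=\sum_l \phi_{\Ahat}(c_l)\,\bm{p}^l .
\]
This holds levelwise. At level $n$, each term with $l\ge n$ is a multiple of $[p]^n$, so it vanishes in $Z_{p,n}$. Its image under $q\mapsto1$ is a multiple of $p^n$, so it vanishes in $\ZZ/p^n\ZZ$ as well. Thus at each level $n$ the sum is finite, and the ring homomorphism $Z_{p,n}\to\ZZ/p^n\ZZ$ commutes with it. Compatibility with the transition maps is automatic, so $\phi_{\Ahat}$ is continuous from the $[\bm{p}]$-adic topology to the $\bm{p}$-adic topology and passes through the series.

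Combining Steps 1–3 gives exactly \cref{prop:msw}. Step 3 is only a mild obstacle; everything else is immediate.
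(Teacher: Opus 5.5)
Your proposal is correct and takes the same route as the paper: the paper obtains this corollary by applying $\phi_{\Ahat}$ to \cref{thm:qmsw}, together with the fact that $\phi_{\Ahat}$ sends both $\zeta^{BZ}_{\Qhat}$ and $\zeta^{SZ}_{\Qhat}$ to $\zeta_{\Ahat}$. Your Steps 1 and 3 spell out what the paper leaves implicit, namely $\phi_{\Ahat}([\bm{p}])=\bm{p}$, $\phi_{\Ahat}(q^{-\bm{p}})=1$, and that the $[\bm{p}]$-adic series is a finite sum at each level $n$, so $\phi_{\Ahat}$ passes through it.
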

\begin{cor}[=\cref{cor:Smsw}]
     For an index $\bm{k} = (k_1, \cdots , k_r)$, we have
     \[
         \widehat{\xi}(\bm{k}) = (-1)^{\mathrm{dep}(\bm{k})} \sum_{l \geq 0} \left(\sum_{\begin{subarray}{c} \bm{l} \in (\mathbb{Z}_{\geq 0})^{\mathrm{dep}(\bm{k})} \\ \mathrm{wt}(\bm{l}) = l \end{subarray}}\sum_{\bm{l} \oplus \bm{k} \preceq \bm{m} \preceq \bm{l} \oslash \bm{k}} \overline{\widehat{\xi}(\bm{m})} \right)t^{l}
     \]
     and
     \[
        \zeta_{\Shat}(\bm{k}) = (-1)^{\mathrm{dep}(\bm{k})} \sum_{l \geq 0} \left(\sum_{\begin{subarray}{c} \bm{l} \in (\mathbb{Z}_{\geq 0})^{\mathrm{dep}(\bm{k})} \\ \mathrm{wt}(\bm{l}) = l \end{subarray}}\sum_{\bm{l} \oplus \bm{k} \preceq \bm{m} \preceq \bm{l} \oslash \bm{k}} \zeta_{\Shat}(\bm{m}) \right)t^{l}.
     \]
\end{cor}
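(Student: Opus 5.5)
We derive this corollary by applying $\phi_{\Shat}$ to both sides of \cref{thm:qmsw}, exactly as the preceding corollary followed from $\phi_{\Ahat}$.

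First we need the images of the auxiliary elements. By definition, $\widehat{ev}([\bm{p}]) = t$, since $[p]_{q_p(t)} = t$; hence $\phi_{\Shat}([\bm{p}]) = t$. Next, $q^{-\bm{p}} = \sum_{j\ge 0}([\bm{p}](1-q))^j$ maps to $\sum_j (t(1-q_p(t)))^j$. As $p\to\infty$, $q_p(0)=\zeta_p\to 1$, and every higher coefficient of $q_p(t)$ is $O(p^{-(l+1)})$ by \cite{TT}*{Proposition A.2}. So $1-q_p(t)\to 0$ coefficientwise, and therefore $\phi_{\Shat}(q^{-\bm{p}})=1$ and $\phi_{\Shat}(1-q)=0$. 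We also use that $\phi_{\Shat}$ is a $\QQ$-algebra homomorphism on $\widehat{\mathcal{O}}$.

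Second, we use the evaluations already established. On the left, $\phi_{\Shat}(\zeta^{BZ}_{\Qhat}(\bm{k}))=\widehat{\xi}(\bm{k})$. On the right, each $\zeta^{SZ}_{\Qhat}(\bm{m})$ maps to $\overline{\widehat{\xi}(\bm{m})}$ by the theorem on $\zeta^{SZ}$ proved above. The prefactor $(-q^{-\bm{p}})^{\mathrm{dep}(\bm{k})}$ maps to $(-1)^{\mathrm{dep}(\bm{k})}$, and $([\bm{p}]q^{-\bm{p}})^l$ maps to $t^l$.

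The main point to justify is that $\phi_{\Shat}$ may be applied termwise to the infinite sum over $l$. The right-hand side converges $[\bm{p}]$-adically, and $\phi_{\Shat}$ is compatible with this topology: any element of $[\bm{p}]^L\Qhat\cap\widehat{\mathcal{O}}$ maps into $t^L\CC[[t]]$, since the components mod $t^n$ for $n\le L$ vanish. Truncating at $l<L$ and comparing the coefficients of $t^l$ for $l<L$ therefore suffices. We also need each summand to lie in $\widehat{\mathcal{O}}$; this holds because it is a polynomial in $[\bm{p}]$, $q$ and $\Qhat$-MZVs, all of which lie in $\widehat{\mathcal{O}}$. Together these give the first identity.

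Finally, we reduce the first identity modulo $\pi i\mathcal{Z}[\pi i][[t]]$. Complex conjugation fixes $\mathcal{Z}[\pi i]$ modulo $\pi i\mathcal{Z}[\pi i]$, so both $\widehat{\xi}(\bm{m})$ and $\overline{\widehat{\xi}(\bm{m})}$ are congruent to $\zeta_{\Shat}(\bm{m})$. This yields the second identity.
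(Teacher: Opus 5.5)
Your proposal is correct and takes the same approach as the paper, which states only that the corollary follows by applying $\phi_{\Shat}$ to \cref{thm:qmsw}. Your additional checks supply details the paper leaves implicit: $\phi_{\Shat}([\bm{p}])=t$, $\phi_{\Shat}(q^{-\bm{p}})=1$, the termwise passage through the $[\bm{p}]$-adically convergent sum, and the reduction modulo $\pi i\mathcal{Z}[\pi i][[t]]$.
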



\end{document}